\documentclass[a4paper,11pt,makeidx]{amsart}
\oddsidemargin 0.4 cm
\evensidemargin 0.4 cm
\textwidth 15.16 cm
\headsep 0.8 cm
\tracingstats=1

\usepackage[usenames,dvipsnames]{color}

\usepackage[backref=page]{hyperref}

\hypersetup{
 colorlinks,
 citecolor=Green,
 linkcolor=blue,
 urlcolor=Blue}

\usepackage{enumerate}
\usepackage{amscd}
\usepackage{xypic}  %commu
\usepackage{amssymb}
\usepackage{amsthm}
\usepackage{epsf}
\makeindex
\newtheoremstyle{break}% name
  {9pt}%      Space above, empty = `usual value'
  {9pt}%      Space below
  {\itshape}% Body font
  {}%         Indent amount (empty = no indent, \parindent = para indent)
  {\bfseries}% Thm head font
  {.}%        Punctuation after thm head
  {\newline}% Space after thm head: \newline = linebreak
  {}%         Thm head spec
\theoremstyle{break}
\newtheorem{bthm}{Theorem}

\newtheorem{bcor}{Corollary}
\theoremstyle{plain}
\newtheorem{thm}{Theorem}[section]
\newtheorem{cor}[thm]{Corollary}

\newtheorem{lemma}[thm]{Lemma}

\newtheorem{prop}[thm]{Proposition}

\newtheorem{crit}[thm]{Criterion}
\newtheorem{rem}[thm]{Remark}

\renewcommand{\proofname}{Proof}

\def\RR{{\mathbb R}}
\def\QQ{{\mathbb Q}}

\def\Proj{\operatorname{Proj}}

\def\Im{\operatorname{Im}}

\def\max{\operatorname{max}}

\def\Id{\operatorname{Id}}

\def\Nef{\operatorname{Nef}}
\def\Big{\operatorname{Big}}
\def\Eff{\operatorname{Eff}}
\def\Mor{\operatorname{Mor}}
\def\Supp{\operatorname{Supp}}
\def\Exc{\operatorname{Exc}}

\def\Sym{\operatorname{Sym}}
\def\End{\operatorname{End}}
\def\ord{\operatorname{ord}}

\def\NN{{\mathbb N}}
\def\QQ{{\mathbb Q}}
\def\PP{{\mathbb P}}
\def\B{\mathbf{B}}

\def\L{{\mathcal L}}

\def\O{{\mathcal O}}

\def\*{\otimes}
                  % product (fiber)
                  % dual

\def\eqv{\equiv}
\def\sub{\subseteq}

\def\ov{\overline}
\def\+{\oplus}                   % direct sum
\def\*{\otimes}                  % tensor product
       % inclusion
       % ----->

\def\End{\operatorname{End}}

\def\Supp{\operatorname{Supp}}
\def\Int{\operatorname{Int}}

\def\Bs{\operatorname{Bs}}

\hfuzz5pc

\begin{document}

\title[Moriwaki divisors and the augmented base loci of divisors on $\overline{M}_g$]{Moriwaki divisors and the augmented base \\ loci of divisors on the moduli space of curves}

\author[S. Cacciola, A.F. Lopez and F. Viviani]{Salvatore Cacciola*, Angelo Felice Lopez* and Filippo Viviani**}

\thanks{* Research partially supported by the MIUR national project ``Geometria delle variet\`a algebriche" PRIN 2010-2011.}

\thanks{** Research partially supported by the MIUR national project ``Geometria delle variet\`a
algebriche" PRIN 2010-2011, by the MIUR national project ``Spazi di moduli e applicazioni" FIRB 2012, by the Research Network Program GDRE-GRIFGA
%,by FCT project ``Espa\c cos de Moduli em Geometria Alg\'ebrica"
and by CMUC - Centro de Matem\'atica da Universidade de Coimbra.}

\address{\hskip -.43cm Dipartimento di Matematica e Fisica, Universit\`a di Roma
Tre, Largo San Leonardo Murialdo 1, 00146, Roma, Italy. \newline e-mail {\tt cacciola@mat.uniroma3.it, lopez@mat.uniroma3.it, filippo.viviani@gmail.com}}

\thanks{{\it Mathematics Subject Classification} : Primary 14H10, 14C20. Secondary 14E30.}

\begin{abstract}
We study the cone of Moriwaki divisors on $\overline{M}_g$ by means of augmented base loci. Using a result of Moriwaki, we prove that an $\RR$-divisor $D$ satisfies the strict Moriwaki inequalities if and only if $\B_+(D) \subseteq \partial \overline{M}_g $. Then we draw some interesting consequences on the Zariski decomposition of divisors on $\overline{M}_g$, on the minimal model program of $\overline{M}_g$ and on the log canonical models $\overline{M}_g(\alpha)$.
\end{abstract}

\maketitle

\section{Introduction}
\label{intro}

Let $g \geq 3$ and let $\overline{M}_g$ be the moduli space of stable curves on genus $g$. A striking result of Gibney, Keel and Morrison \cite[Thm. 0.9]{gkm} asserts that any nef divisor on $\overline{M}_g$, not linearly equivalent to zero, must be big. In terms of cones of divisors in the N\'eron-Severi space $N^1(\overline{M}_g)_{\RR}$, this implies that the nef cone does not meet the boundary of the big cone along rational nonzero classes. As a matter of fact, as we shall see, the same is true for real classes: $\Nef(\overline{M}_g) - \{0\} \subset \Big(\overline{M}_g)$. One way to see this is to consider the {\bf Moriwaki cone} $\Mor(\overline{M}_g)$,  that is the cone of $\RR$-divisors $D$ on $\overline{M}_g$ that are nef away from the boundary. The cone $\Mor(\overline{M}_g)$  was explicitly described by Moriwaki \cite[Cor. 4.3]{m1} in terms of the generators $\lambda, \delta_0, \ldots, \delta_{\lfloor g/2 \rfloor}$: an $\RR$-divisor $D \sim a \lambda - b_0 \delta_0 - \ldots - b_{\lfloor g/2\rfloor} \delta_{\lfloor g/2\rfloor}$ belongs to $\Mor(\overline{M}_g)$ if and only if it is an {\bf M-divisor}, that is it satisfies the {\bf Moriwaki inequalities}
\begin{equation}
\label{morineq}
a \geq 0, \ a \geq \frac{8g + 4}{g} b_0, \ a \geq \frac{2g + 1}{i(g-i)} b_i, \mbox{ for all} \ i = 1, \ldots , \lfloor g/2 \rfloor.
\end{equation}
The starting idea of this paper is that both the Moriwaki cone  and its interior, that is the cone of those $\RR$-divisors that satisfy the strict Moriwaki inequalities and which we call {\bf strict M-divisors}, can be interpreted in terms of restricted and augmented base loci.

Recall that the \emph{stable base locus} $\B(D)$ of an $\RR$-Cartier $\RR$-divisor $D$ on a normal projective variety $X$ is defined as
\[ \B(D)= \bigcap_{E \geq 0  :  E \sim_\RR D} \Supp(E), \]
with the convention that $\B(D)=X$ if the above intersection runs over the empty set.

%Let $X$ be a normal projective variety and let $D$ be an $\RR$-Cartier $\RR$-divisor on $X$. The {\bf stable base locus} $\B(D)$ of $D$ is $X$ if there is no $\RR$-Cartier $\RR$-divisor $E \geq 0$ such that $E \sim_\RR D$, or otherwise
%\[ \B(D)= \bigcap_{E \geq 0  :  E \sim_\RR D} \Supp(E). \]

The \emph{augmented base locus} and the \emph{restricted base locus} of $D$ are, respectively,
\[ \B_+(D)= \bigcap_{A {\ \rm ample}} \B(D-A) \ \text{ and } \ \B_-(D)= \bigcup_{A {\ \rm ample}} \B(D+A) \]
where $A$ runs among all ample $\RR$-Cartier $\RR$-divisors.
  We have the inclusions $\B_-(D) \subseteq \B(D) \subseteq \B_+(D)$ and $D$ is big if and only if $\B_+(D) \subsetneq X$.

Returning to $\overline{M}_g$, the main result of this article, where the assertion on $ \B_-(D)$ is just a rewriting of \cite[Thm. C]{m1}, is the following

\begin{bthm}
\label{main}
Let $g \geq 3$ and let $D$ be an $\RR$-divisor on $\overline{M}_g$. Then
\begin{enumerate}[(i)]
\item $\B_-(D) \subseteq \partial \overline{M}_g$  if and only if $D$ is an M-divisor;
\item $\B_+(D) \subseteq \partial \overline{M}_g$ if and only if $D$  is a strict M-divisor.
\end{enumerate}
\end{bthm}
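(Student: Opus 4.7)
The plan is to deduce part~(ii) from part~(i), which is given to us as a rewriting of Moriwaki's Theorem~C in \cite{m1}. The strategy in both directions is to perturb $D$ by an ample divisor $A$ and translate between $\B_+$ and $\B_-$ via the formal properties of the stable base locus for $\RR$-divisors. Fix once and for all an ample divisor $A$ on $\overline{M}_g$.

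For the implication ``strict M-divisor $\Rightarrow$ $\B_+(D)\subseteq\partial\overline{M}_g$'', I exploit that the strict Moriwaki inequalities~(\ref{morineq}) are open conditions in the $(a,b_0,\ldots,b_{\lfloor g/2\rfloor})$-coordinates: for all sufficiently small $\epsilon>0$ the perturbation $D-\epsilon A$ still satisfies the non-strict inequalities and is therefore an M-divisor. Part~(i) applied to $D-\epsilon A$ yields $\B_-(D-\epsilon A)\subseteq \partial\overline{M}_g$. It then suffices to prove the formal inclusion $\B_+(D)\subseteq \B_-(D-\epsilon A)$: unwinding definitions, $\B_-(D-\epsilon A)=\bigcup_{\mu>0}\B(D-(\epsilon-\mu)A)$, and taking $\mu=\epsilon-\eta$ with $\eta$ small positive gives $\B(D-\eta A)$ as a term in the union. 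By the standard stabilization of the stable base locus of $\RR$-divisors under small ample shifts, $\B(D-\eta A)=\B_+(D)$ for all sufficiently small $\eta>0$, so the inclusion follows.

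For the converse, assume $\B_+(D)\subseteq\partial\overline{M}_g$. Then $\B_+(D)\subsetneq\overline{M}_g$, so $D$ is big, and since $\B_-(D)\subseteq \B_+(D)\subseteq \partial\overline{M}_g$, part~(i) gives that $D$ is an M-divisor. Suppose for contradiction that $D$ fails to be strict, so one of the inequalities~(\ref{morineq}) is an equality. The description of the cone $\Mor(\overline{M}_g)$ in \cite[Cor.~4.3]{m1} is obtained by exhibiting, for each of those inequalities, an irreducible curve $C\subseteq \overline{M}_g$ with $C\not\subseteq\partial\overline{M}_g$ on which equality is attained; hence there exists such a $C$ with $D\cdot C=0$. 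For every $\epsilon>0$ the intersection $(D-\epsilon A)\cdot C=-\epsilon(A\cdot C)$ is strictly negative, so for every effective $\RR$-divisor $E\sim_\RR D-\epsilon A$ one necessarily has $C\subseteq\Supp(E)$ (otherwise $E|_C$ would be a well-defined effective $\RR$-divisor of negative degree). Thus $C\subseteq \B(D-\epsilon A)=\B_+(D)$ for $\epsilon$ small, contradicting $C\not\subseteq \partial\overline{M}_g$.

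The most substantive ingredient, as I see it, is the extraction of a test curve $C\not\subseteq \partial\overline{M}_g$ realizing equality in a given Moriwaki inequality; this is precisely what is packaged into the proof of \cite[Cor.~4.3]{m1}, where the extremal rays of $\Mor(\overline{M}_g)$ are identified via explicit boundary-avoiding curves. Once that input is available, both implications in (ii) reduce to formal manipulations: openness of strict linear inequalities, together with the inclusion relations among $\B$, $\B_+$ and $\B_-$ under small ample perturbations.
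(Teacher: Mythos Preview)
Your proof is correct and follows essentially the same route as the paper: for the forward implication in (ii) you perturb $D$ by a small ample, invoke (i) on the perturbation, and pass from $\B_-$ back to $\B_+$ via the stabilization $\B_+(D)=\B(D-\eta A)$ for $\eta\ll 1$, exactly as the paper does with $D'=D-2A$; for the converse you use the same Moriwaki test curves $C,C_0,\ldots,C_{\lfloor g/2\rfloor}$ not contained in $\partial\overline{M}_g$ (the paper phrases this as ``$\gamma\not\subseteq\B_+(D)\Rightarrow D_{|\gamma}$ big'', which is precisely your negative-intersection contradiction). The only cosmetic difference is that the paper writes out the proof of (i) via the decomposition $D=\beta M+E$ and Moriwaki's Theorem~B, whereas you take (i) as a black box from \cite[Thm.~C]{m1}, which the paper itself sanctions in the introduction.
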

Now nef non zero divisors are strict M-divisors (see  Lemma \ref{nefM}),  therefore the first simple consequence of Theorem \ref{main} is that
\[ \Nef(\overline{M}_g) - \{0\} \subset \Int(\Mor(\overline{M}_g)) \subset \Big(\overline{M}_g). \]
Note that this gives another proof on $\overline{M}_g$, but for $\RR$-divisors, of \cite[Thm. 0.9]{gkm}.

\vspace{0.2cm}

The following Figure is a schematic picture of the  Moriwaki cone and its relative position with respect to the nef cone $\Nef(\ov{M}_g)$ and the pseudoeffective cone $\ov{\Eff}(\ov{M}_g)$ of $\overline{M}_g$.

\begin{figure}[h!]
\unitlength 0.7mm % = 2.845pt
\linethickness{0.4pt}
\ifx\plotpoint\undefined\newsavebox{\plotpoint}\fi % GNUPLOT compatibility
\begin{picture}(334,119.25)(50,90)
\put(112,204){\line(1,0){99.75}}
\put(141,209.25){\makebox(0,0)[rc]{}}
\multiput(112,203.75)(-.03373430962,-.06066945607){956}{\line(0,-1){.06066945607}}
\multiput(79.75,145.75)(.03372333104,-.03406744666){1453}{\line(0,-1){.03406744666}}
\multiput(211.5,203.75)(.03372811535,-.04454170958){971}{\line(0,-1){.04454170958}}
\multiput(244,160.75)(-.03373015873,-.05133928571){1008}{\line(0,-1){.05133928571}}
\qbezier(210.25,109.5)(177,79.625)(128.75,96.25)
\multiput(112.25,203.75)(.033731853117,-.039175918019){2342}{\line(0,-1){.039175918019}}
\multiput(191.25,112)(.0336938436,.153078203){601}{\line(0,1){.153078203}}
\thicklines
\put(162,182){\line(0,-1){82.25}}
\thinlines
\multiput(162,196)(-.0337209302,-.0447674419){430}{\line(0,-1){.0447674419}}
\put(216,166){\makebox(0,0)[cc]{$\Mor(\ov{M}_g)$}}
\put(240,131){\makebox(0,0)[cc]{$\ov{\Eff}(\ov{M}_g)$}}
\put(161.93,181.93){\line(0,1){.9565}}
\put(161.93,183.843){\line(0,1){.9565}}
\put(161.93,185.756){\line(0,1){.9565}}
\put(161.93,187.669){\line(0,1){.9565}}
\put(161.93,189.582){\line(0,1){.9565}}
\put(161.93,191.495){\line(0,1){.9565}}
\put(161.93,193.408){\line(0,1){.9565}}
\put(161.93,195.321){\line(0,1){.9565}}
\put(161.93,197.234){\line(0,1){.9565}}
\put(161.93,199.147){\line(0,1){.9565}}
\put(161.93,201.06){\line(0,1){.9565}}
\put(161.93,202.973){\line(0,1){.9565}}
\put(159,198){\makebox(0,0)[cc]{$\lambda$}}
\put(162,207){\makebox(0,0)[cc]{$\delta$}}
\put(115,207){\makebox(0,0)[cc]{Effective boundary divisors}}
\multiput(148,177)(.04126547455,-.03370013755){727}{\line(1,0){.04126547455}}
\multiput(178,152.5)(.0337370242,.0523356401){578}{\line(0,1){.0523356401}}
\multiput(197.5,182.5)(-.0885286783,.0336658354){401}{\line(-1,0){.0885286783}}
\put(187,193){\makebox(0,0)[cc]{$\Nef(\ov{M}_g)$}}
\put(173,183){\makebox(0,0)[cc]{$13\lambda-\delta$}}
\put(173,167){\makebox(0,0)[cc]{$11\lambda-\delta$}}
\put(194,109){\makebox(0,0)[cc]{M}}
\put(176,146){\makebox(0,0)[cc]{$\frac{8g+4}{g}\lambda-\delta$}}
\put(173,101){\makebox(0,0)[cc]{$\frac{13}{2}\lambda-\delta$}}
\multiput(308.25,90.75)(.2536363636,-.0336363636){275}{\line(1,0){.2536363636}}
\multiput(378,81.5)(.033707865,-.049157303){178}{\line(0,-1){.049157303}}
\put(162,100){\line(-1,0){.25}}
\put(161.93,99.93){\line(0,-1){.975}}
\put(161.93,97.98){\line(0,-1){.975}}
\put(161.93,96.03){\line(0,-1){.975}}
\put(161.93,94.08){\line(0,-1){.975}}
\put(161.93,92.13){\line(0,-1){.975}}
\put(161.93,91.00){\line(0,-1){.975}}
\put(165,86){\makebox(0,0)[cc]{$s_g\lambda-\delta$}}
\end{picture}
\vspace{0.05cm}
\caption{A section of the three cones $\Nef(\ov{M}_g)\sub \Mor(\ov{M}_g)\sub \ov{\Eff}(\ov{M}_g)$ and their intersection with the plane $\langle \lambda, \delta \rangle$.
Here $s_g$ is the slope of $\ov{M}_g$ (see \cite{hmo}) which, for the sake of the picture, is assumed to be $\leq \frac{13}{2}$ (this is  known to be true for $g \geq 22$).}
%by \cite[Thm. 1 and 2]{eh}, \cite[Thm. 1]{f1} and \cite[Thm. 1.4]{f2}).}
 \label{fig}
\end{figure}
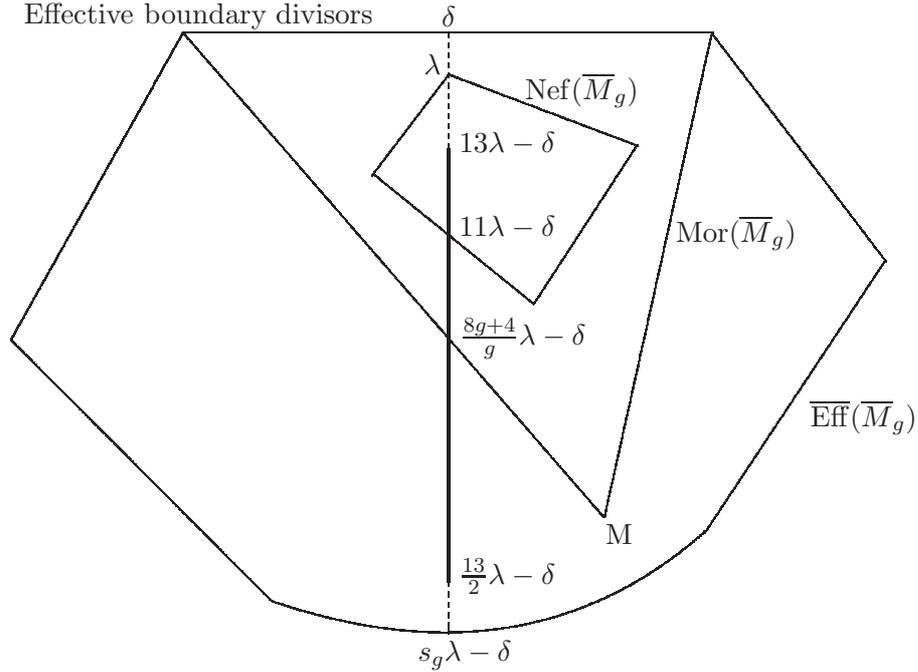

%\begin{rem}
%\label{rmkmor}
We point out that from \eqref{morineq} it follows that  $\Mor(\ov{M}_g)$ is a simplicial polyhedral cone whose extremal rays are generated by the boundary divisors $\{\delta_0, \ldots, \delta_{\lfloor g/2 \rfloor}\}$ and by the
{\bf Moriwaki divisor}
\begin{equation}\label{mordiv}
 M =  (8g + 4) \lambda - g \delta_0 - \sum\limits_{i = 1}^{\lfloor g/2 \rfloor} 4i(g-i) \delta_i .
 \end{equation}
Although we do not use the following facts, $M$ is known to be a big divisor
(see \cite{lop}))  and consequently the boundary of 
%Indeed $M$ is a big divisor (see \cite{lop} - we do not need this fact in the article). This implies that the boundary of 
$\Mor(\ov{M}_g)$ intersects the boundary of the pseudoeffective cone $\ov{\Eff}(\ov{M}_g)$ only in the common codimension-one face formed by effective boundary divisors.
The Moriwaki divisor also appears in the works of Hain-Reed \cite{hr} and of Hain \cite{hai}.
%\end{rem}

\vspace{0.1cm}

Another consequence of Theorem \ref{main} is that it gives many compactifications of $M_g$, generalizing \cite[Cor. 0.11]{gkm}.

\begin{bcor}
\label{mod1}
Let $g \geq 3$, let $D$ be a $\QQ$-divisor on $\overline{M}_g$ such that $\kappa(\overline{M}_g, D) \geq 0$ and for $m \in \NN$ consider the map $\varphi_{mD} : \overline{M}_g \dashrightarrow \PP H^0(\overline{M}_g, m D)$.
If $D$ is a strict M-divisor then there exists $m_0 \in \NN$ such that $\varphi_{mm_0D}$ is an isomorphism over $M_g$ for any $m \in \NN$. Vice versa  if there exists $m_1 \in \NN$ such that $\varphi_{mm_1D}$ is an isomorphism over $M_g$ for any $m \in \NN$, then $\B_+(D) \subseteq \partial \overline{M}_g \cup \B(D)$ and $D$ is a strict M-divisor when $\B(D) \subseteq \partial \overline{M}_g$.
\end{bcor}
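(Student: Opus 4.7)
My plan is to reduce both implications to Theorem~\ref{main}(ii) by invoking the standard geometric description of the augmented base locus for a big $\QQ$-divisor $D$ on a normal projective variety $X$: the open set $X\setminus\B_+(D)$ equals the largest Zariski open $U \sub X$ on which some $\varphi_{mD}$ is a morphism restricting to an open immersion (the ``non-ample locus'' description of $\B_+$, going back to Nakamaye and treated in the general setting by Birkar, Boucksom--Broustet--Pacienza and related references on augmented base loci).

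For the forward direction, assume $D$ is a strict M-divisor. Theorem~\ref{main}(ii) gives $\B_+(D) \sub \partial\ov{M}_g \subsetneq \ov{M}_g$, so $D$ is big. The characterization above yields $m_0 \in \NN$ such that $\varphi_{m_0 D}$ is an open immersion on $U := \ov{M}_g \setminus \B_+(D) \supseteq M_g$. For any $m \geq 1$, the image of the multiplication map
\[
\Sym^m H^0(\ov{M}_g, m_0 D) \to H^0(\ov{M}_g, mm_0 D)
\]
already separates points and tangent directions on $U$, hence so does the full linear system $|mm_0 D|$; therefore $\varphi_{mm_0 D}$ is itself an open immersion on $U$, and in particular an isomorphism over $M_g$.

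Conversely, suppose such an $m_1$ exists. Given $x \in M_g \setminus \B(D) = M_g \setminus \B(mm_1 D)$, the hypothesis says $\varphi_{mm_1 D}$ is a morphism near $x$ and an open immersion there. Applying the same characterization of $\B_+$ in the reverse direction gives $x \notin \B_+(D)$, whence
\[
\B_+(D) \sub \partial \ov{M}_g \cup \B(D).
\]
If in addition $\B(D) \sub \partial \ov{M}_g$, this yields $\B_+(D) \sub \partial \ov{M}_g$, and Theorem~\ref{main}(ii) forces $D$ to be a strict M-divisor.

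The only real point to verify is the cited open-immersion characterization of $X\setminus\B_+(D)$ for big $\QQ$-divisors, together with the elementary observation that enlarging a separating linear system preserves separation; granted these, both directions are essentially formal consequences of Theorem~\ref{main}(ii).
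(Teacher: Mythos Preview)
Your argument is correct and follows essentially the same route as the paper: both directions are reduced to Theorem~\ref{main}(ii) via the characterization of $X\setminus\B_+(D)$ as the largest open subset of $X\setminus\B(D)$ on which the maps $\varphi_{mD}$ become isomorphisms onto their images. The paper invokes this as a black box, citing \cite[Thm.~2]{bcl} (Boucksom--Cacciola--Lopez), which is exactly the result you describe but stated for normal projective varieties; this is the relevant reference here since $\ov{M}_g$ is normal but singular, whereas Nakamaye's original theorem is for smooth varieties. Your unpacking of the ``for all $m$'' part in the forward direction via the multiplication map $\Sym^m H^0(m_0D)\to H^0(mm_0D)$ is a valid way to recover what \cite[Thm.~2]{bcl} already packages, and the converse is handled identically.
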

It would be interesting to know whether some of the compactifications obtained in Corollary \ref{mod1}  arise from (stable) modular compactifications in the sense of \cite[Def. 1.1, 1.2]{s} or if, conversely, all the (stable) modular compactifications of \cite{s} arise from strict M-divisors.

In another direction, it would be desirable to extend Theorem \ref{main}  and Corollary \ref{mod1}  to
$\overline{M}_{g,n}$. A first partial result in this direction has been established in \cite[Cor. 2]{cl}, where it is proved, as a consequence of \cite[Thm. 0.9]{gkm}, that big and nef divisors on
$\overline{M}_{g,n}$ have their augmented base loci contained in the boundary of $\overline{M}_{g,n}$.

We can also apply Theorem \ref{main} to get some information on the log canonical models introduced by Hassett and Hyeon \cite{hh1}, \cite{hh2},
\[ f_{\alpha} : \overline{M}_g \dashrightarrow \overline{M}_g(\alpha) = \Proj\left(\bigoplus\limits_{m \geq 0}H^0(\ov M_ g, \lfloor m(13 \lambda - (2-\alpha) \delta) \rfloor )\right) \]
for $\alpha \in [0, 1] \cap \QQ$, where $f_{\alpha}$ is the standard rational map associated to the construction of $\Proj$ (see \cite[\href{http://stacks.math.columbia.edu/tag/01NK}{Tag 01NK}]{st}), or equivalently, the map associated to the linear system $|m(13 \lambda - (2-\alpha) \delta)|$, for $m \gg 0$ (see Cor. \ref{ka}).

In Figure \ref{fig}, we have depicted the intersection of the segment $\displaystyle \left\{\frac{13}{2-\alpha} \lambda - \delta\: : \: \alpha\in [0,1]\right\}$ with the cones $\Nef(\ov{M}_g)\sub \Mor(\ov{M}_g)\sub \ov{\Eff}(\ov{M}_g)$.

It has been asked by Hassett\footnote{in the open problem session of the AIM workshop ``The log minimal model program for the moduli space of curves", Palo Alto (California, USA), 10-14 December 2012. During the same problem session, M. Fedorchuk said that he could answer to the question away from the hyperelliptic locus.} whether the map $f_{\alpha}$ is an isomorphism over $M_g$ when $\alpha > \frac{3g + 8}{8g+4}$. We give an affirmative answer in the following

\begin{bcor}
\label{mod2}
Let $g \geq 3$. Then
\begin{enumerate}[(i)]
\item \label{mod2i} $f_{\alpha}$ is an isomorphism over $M_g$ if and only if $\alpha > \frac{3g + 8}{8g+4}$;
\item \label{mod2ii} If $\alpha = \frac{3g + 8}{8g+4}$ then $f_{\alpha}$ is defined over $M_g$ and it contracts the hyperelliptic locus $\ov{H}_g\subset \ov{M}_g$;
\item \label{mod2iii} If $\alpha<\frac{3g+8}{8g+4}$  then the hyperelliptic locus $\ov{H}_g$ is contained in $\B_-(13 \lambda - (2-\alpha) \delta)$.
\end{enumerate}
\end{bcor}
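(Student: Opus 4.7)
The plan is to reduce everything to Theorem \ref{main} and Corollary \ref{mod1} via a direct analysis of when $D_\alpha := 13\lambda - (2-\alpha)\delta$ is an (strict) M-divisor. Writing $D_\alpha$ as $a\lambda - \sum b_i\delta_i$ with $a=13$ and $b_i = 2-\alpha$ for all $i$, the Moriwaki inequality $a \geq \frac{8g+4}{g}b_0$ reduces to $\alpha \geq \frac{3g+8}{8g+4} =: \alpha_0$, while the remaining inequalities $a \geq \frac{2g+1}{i(g-i)}b_i$ for $i\geq 1$ are strictly weaker at $\alpha=\alpha_0$ (this amounts to $(g-1)(8g+4)>g(2g+1)$, i.e.\ $6g^2-5g-4>0$, valid for $g\geq 3$). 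Hence $D_\alpha$ is a strict M-divisor iff $\alpha>\alpha_0$, an M-divisor but not strict iff $\alpha=\alpha_0$, and outside $\Mor(\overline{M}_g)$ iff $\alpha<\alpha_0$.

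For part (iii) I would use the Cornalba--Harris pencils $B\subset\overline{H}_g$ of genus-$g$ hyperelliptic curves: through every point of $\overline{H}_g$ passes (the closure of) such a one-parameter family, with numerical invariants $B\cdot\lambda = g$, $B\cdot\delta_0=8g+4$ and $B\cdot\delta_i=0$ for $i\geq 1$. A direct computation gives $D_\alpha\cdot B = 13g - (2-\alpha)(8g+4)$, which is negative precisely when $\alpha<\alpha_0$. For any ample $A$, the class $D_\alpha + \epsilon A$ still has negative intersection with $B$ for $\epsilon>0$ small, so $B\subseteq \B(D_\alpha+\epsilon A)\subseteq \B_-(D_\alpha)$; covering $\overline{H}_g$ by such pencils yields $\overline{H}_g\subseteq \B_-(D_\alpha)$.

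The content of (ii) is that $D_{\alpha_0}$ is ``just barely'' an M-divisor, and I would exploit this through the identity
\[ (8g+4)\,D_{\alpha_0} \;=\; 13\,M \;+\; 13\sum_{i\geq 1}\bigl(4i(g-i)-g\bigr)\delta_i, \]
in which every coefficient $4i(g-i)-g$ is strictly positive for $1\leq i\leq \lfloor g/2\rfloor$ when $g\geq 3$. Multiplying a section $s\in H^0(\overline{M}_g,13nM)$ by the canonical section of the effective boundary part produces, for every $n\geq 1$, a section of $n(8g+4)D_{\alpha_0}$ whose zero locus lies in $\{s=0\}\cup\partial\overline{M}_g$, giving the inclusion $\B(D_{\alpha_0})\cap M_g \subseteq \B(M)\cap M_g$. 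Invoking Moriwaki's explicit construction in \cite{m1}, which represents a multiple of $M$ by an effective divisor supported on the boundary, one obtains $\B(M)\subseteq\partial\overline{M}_g$, so $f_{\alpha_0}$ is defined on $M_g$. The same Cornalba--Harris pencils now satisfy $D_{\alpha_0}\cdot B = 0$, hence $f_{\alpha_0}$ is defined on and contracts each such $B$ to a point; since they cover $\overline{H}_g$, the image $f_{\alpha_0}(\overline{H}_g)$ has strictly smaller dimension.

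Part (i) is then a formal consequence. For $\alpha>\alpha_0$, $D_\alpha$ is a strict M-divisor, so Corollary \ref{mod1}, combined with the identification $f_\alpha=\varphi_{mD_\alpha}$ for $m\gg 0$ from Corollary \ref{ka}, shows that $f_\alpha$ is an isomorphism over $M_g$. Conversely, for $\alpha\leq\alpha_0$ parts (ii)--(iii) imply that $f_\alpha$ either fails to be defined on the nonempty open subset $H_g\subseteq M_g$ or contracts it, and so cannot be an isomorphism over $M_g$. The main technical obstacle I anticipate is the step $\B(M)\subseteq\partial\overline{M}_g$ in (ii): it does not seem to follow from Theorem \ref{main} alone (which only bounds $\B_-(M)$, and $M$ is on the boundary of $\Mor(\overline{M}_g)$), so I would need to appeal directly to Moriwaki's effective boundary representative of a multiple of $M$ from \cite{m1}.
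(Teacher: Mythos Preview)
Your treatment of (iii), the contraction half of (ii), and the forward direction of (i) matches the paper's argument (Cornalba--Harris pencils, Corollary~\ref{mod1}, Corollary~\ref{ka}). The gap is exactly where you flag it: the step $\B(M)\subseteq\partial\overline{M}_g$ needed to show $f_{\alpha_0}$ is defined on $M_g$.

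Your proposed fix does not work. There is no effective divisor supported on $\partial\overline{M}_g$ that is linearly equivalent to a positive multiple of $M$: on $\overline{M}_g$ the classes $\lambda,\delta_0,\ldots,\delta_{\lfloor g/2\rfloor}$ are linearly independent in $\Pic(\overline{M}_g)_{\QQ}$, so an identity $nM\sim\sum c_i\Delta_i$ would force the $\lambda$-coefficient $n(8g+4)$ to vanish. What Moriwaki proves (and what the paper cites as \cite[Thm.~B]{m1}) is only $\B_-(M)\subseteq\partial\overline{M}_g$, which does not by itself bound $\B(M)$.

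The paper avoids the detour through $M$ entirely and works directly with $K_{\alpha_0}$. The key observation is that $K_{\alpha_0}=K_{\overline{M}_g}+B_{\alpha_0}$ with $(\overline{M}_g,B_{\alpha_0})$ klt (this is already used in Corollary~\ref{ka}), so $R(\overline{M}_g,K_{\alpha_0})$ is finitely generated by \cite{bchm}. Finite generation gives $\B(K_{\alpha_0})=\B_-(K_{\alpha_0})$ (via the asymptotic-order-of-vanishing argument as in \cite[Prop.~2.8]{elmnp} or Remark~\ref{uni}), and then Theorem~\ref{main}(i) yields $\B_-(K_{\alpha_0})\subseteq\partial\overline{M}_g$ directly, since $K_{\alpha_0}$ is an M-divisor. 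Your decomposition $(8g+4)D_{\alpha_0}=13M+13\sum_{i\geq 1}(4i(g-i)-g)\delta_i$ is correct but unnecessary once you argue this way. The same finite-generation input also cleans up the converse in (i): if $f_\alpha$ is an isomorphism over $M_g$ then $\Bs|mK_\alpha|\subseteq\partial\overline{M}_g$ for $m\gg 0$, hence $\B(K_\alpha)\subseteq\partial\overline{M}_g$, and the ``vice versa'' clause of Corollary~\ref{mod1} then forces $K_\alpha$ to be a strict M-divisor.
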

Note that part \eqref{mod2iii} implies that $f_{\alpha}$ is not defined over $\ov{H}_g$ whenever $\ov{H}_g$ is not contained in a divisorial component of $\B(13 \lambda - (2-\alpha) \delta)$ (which of course can occur only for $g\geq 4$).
We also remark that, whenever $13 \lambda - (2-\alpha) \delta$ is big, we have that $\B_-(13 \lambda - (2-\alpha) \delta) = \B(13 \lambda - (2-\alpha) \delta)$ (see Rmk. \ref{uni}).

\vspace{0.1cm}

Our next goal is deduce, from Theorem \ref{main}, some interesting consequences on the Zariski decomposition of divisors and on the minimal models of $\overline{M}_g$.

Recall that, given a pseudoeffective $\RR$-Cartier $\RR$-divisor $D$ on a normal projective variety $X$,
we say that $D$ has an \emph{$\RR$-CKM  Zariski decomposition} (CKM stands for Cutkoski-Kawamata-Moriwaki) if we can write\\
\vspace{0.1cm}
\centerline{$D = P + N$}
\vspace{0.1cm}
%\[ D = P + N \]
where $P, N$ are $\RR$-Cartier $\RR$-divisors such that $P$ is nef, $N$ is effective and $h^0(\lfloor mD \rfloor) = h^0(\lfloor mP \rfloor)$ for all $m \in \NN$, where $\lfloor mD \rfloor$ (and $\lfloor mP \rfloor$) is the round-down.

%We first recall the following \cite[Def. 1.1]{c}, \cite[\S 1]{k1}, \cite[Def. 1.1.5]{m2}
%\begin{defn}
%Let $X$ be a normal projective variety and let $D$ be a pseudoeffective $\RR$-Cartier $\RR$-divisor on $X$. We say that $D$ has an $\RR$-CKM Zariski decomposition if we can write
%\[ D = P + N \]
%where $P, N$ are $\RR$-Cartier $\RR$-divisors such that $P$ is nef, $N$ is effective and $h^0(\lfloor mD \rfloor) = h^0(\lfloor mP \rfloor)$ for all $m \in \NN$, where $\lfloor mD \rfloor$ and $\lfloor mP \rfloor$ are the round downs.
%\end{defn}

While on a smooth surface a Zariski decomposition always exists, by the celebrated result of Zariski, in general, on higher dimensional varieties, divisors may or may not have an $\RR$-CKM Zariski decomposition, even if we allow to pass to a birational model \cite[Thm. IV.2.10]{nak}. On the other hand, on a variety of nonnegative Kodaira dimension, the canonical bundle is expected to admit an $\RR$-CKM Zariski decomposition, after passing to a birational model, as a consequence of the conjectured existence of minimal models.

On $\overline{M}_g$ we obtain

\begin{bcor}
\label{mod3}
Let $g \geq 3$ and let  $D$ be an $\RR$-divisor on $\overline{M}_g$ such that $\kappa(D) \geq 1$. If $D$ has an $\RR$-CKM Zariski decomposition, then $D$ is a strict M-divisor. In particular, when $\kappa(\overline{M}_g) \geq 1$ (currently known for $g \geq 22$), the canonical divisor $K_{\overline{M}_g}$ does not have an $\RR$-CKM Zariski decomposition.
\end{bcor}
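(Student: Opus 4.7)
The plan is to apply Theorem \ref{main}(ii) to both the nef part $P$ and to $D$ itself, transferring strict M-positivity from $P$ to $D$ through the CKM Zariski structure.

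I would begin with the decomposition $D=P+N$ and exploit the defining identity $h^0(\lfloor mD\rfloor)=h^0(\lfloor mP\rfloor)$ together with the hypothesis $\kappa(D)\geq 1$ to obtain $\kappa(P)\geq 1$, so $P$ is a nonzero nef $\RR$-Cartier divisor on $\overline{M}_g$. As observed just after Theorem \ref{main}, intersecting with $F$-curves shows that every nonzero nef divisor on $\overline{M}_g$ is already a strict M-divisor, so Theorem \ref{main}(ii) applied to $P$ gives $\B_+(P)\sub\partial\overline{M}_g$; in particular $P$ is big and nef.

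The next step is to compare the augmented base loci of $D$ and $P$. For every ample $\RR$-divisor $A$, the effectivity of $N$ yields $\B(D-A)\sub \B(P-A)\cup\Supp(N)$: indeed, if $x\notin\B(P-A)\cup\Supp(N)$ then some effective $E'\sim_\RR P-A$ avoids $x$, and $E'+N\sim_\RR D-A$ is effective and misses $x$. Intersecting over all ample $A$ gives
\[
\B_+(D)\sub \B_+(P)\cup\Supp(N)\sub\partial\overline{M}_g\cup\Supp(N),
\]
so in view of Theorem \ref{main}(ii) applied to $D$, it is enough to establish $\Supp(N)\sub\partial\overline{M}_g$.

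This last inclusion is the main obstacle, and it is where the full strength of the CKM Zariski condition (not just its volume consequence) has to be used. The strategy I would pursue is an argument by contradiction: if $E$ is a prime component of $\Supp(N)$ with $E\not\sub\partial\overline{M}_g$, then $E\not\sub\B_+(P)$, and Nakamaye's theorem applied to the big and nef divisor $P$ gives that the class $P|_E$ is big on $E$. On the other hand, the CKM equality forces every section of $\lfloor mD\rfloor$ to vanish along $E$ for $m$ large enough that $m\cdot\mathrm{mult}_E N\geq 1$; combining the restriction exact sequence
\[
0\to\O(mD-E)\to\O(mD)\to \O_E(mD|_E)\to 0
\]
iteratively with Fujita approximation for $P|_E$, one produces effective representatives of $mD$ on which the multiplicity along $E$ is strictly less than $m\,\mathrm{mult}_E N$, contradicting the asymptotic Zariski identity. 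Hence $\Supp(N)\sub\partial\overline{M}_g$, and Theorem \ref{main}(ii) then yields that $D$ is a strict M-divisor.

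For the final assertion about $K_{\overline{M}_g}$, the coefficients $a=13$ and $b_0=2$ already violate the non-strict Moriwaki inequality $a\geq\frac{8g+4}{g}b_0$ for every $g\geq 3$, since it would reduce to $13\geq 16+\tfrac{8}{g}$, which never holds. Thus $K_{\overline{M}_g}$ is not even an M-divisor, and in particular not a strict M-divisor; whenever $\kappa(\overline{M}_g)\geq 1$ the first part therefore rules out any $\RR$-CKM Zariski decomposition of $K_{\overline{M}_g}$.
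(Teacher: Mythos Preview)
Your overall strategy coincides with the paper's: show that the nef part $P$ is a nonzero nef divisor, hence a strict M-divisor by intersection with $F$-curves, deduce $\B_+(P)\sub\partial\ov{M}_g$ from Theorem~\ref{main}(ii), and then transfer this to $D$. The easy inclusion $\B_+(D)\sub\B_+(P)\cup\Supp(N)$ is fine, and the final paragraph on $K_{\ov{M}_g}$ is correct.

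The gap is in the step where you claim $\Supp(N)\sub\partial\ov{M}_g$. Your contradiction argument is not a proof as it stands: from the CKM identity you correctly extract that every section of $\lfloor mD\rfloor$ vanishes along $E$ to order at least (roughly) $m\,\mathrm{mult}_E N$, but the construction in the other direction --- ``combining the restriction exact sequence iteratively with Fujita approximation for $P|_E$'' --- does not produce a section of $mD$ with strictly smaller multiplicity along $E$. Knowing that $P|_E$ is big lets you find sections of $mP$ not vanishing on $E$, but pushing such a section into $H^0(mD)$ via $+\,mN$ gives multiplicity \emph{exactly} $m\,\mathrm{mult}_E N$ along $E$, not less; and the restriction sequence for $mD$ itself tells you nothing useful about $\O(mD)|_E$, since $E\sub\Supp N$ makes the restriction involve powers of the normal bundle $E|_E$ that you have no control over. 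Fujita approximation for $P|_E$ does not address this obstacle.

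The paper closes this gap not by a section-counting argument but by invoking Nakayama's $\sigma$-decomposition: any $\RR$-CKM Zariski decomposition of a big divisor satisfies $N=N_\sigma(D)$ and $P=P_\sigma(D)$ (this is \cite[Rmk.~III.1.17(3)]{nak}), and one then shows directly, using the subadditivity properties of the invariants $\sigma_\Gamma$, that every component $\Gamma$ of $N_\sigma(D)$ lies in $\B_+(P)$ (this is Lemma~\ref{B+zd}, yielding in fact the equality $\B_+(D)=\B_+(P)$). Once that lemma is available, your inclusion $\B_+(D)\sub\B_+(P)\cup\Supp(N)$ collapses to $\B_+(D)=\B_+(P)\sub\partial\ov{M}_g$, and Theorem~\ref{main}(ii) finishes the proof. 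So the missing ingredient is precisely this identification with the $\sigma$-decomposition; without it your argument does not go through.
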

We stress that, for $g \geq 24$ or $g = 22$, since $K_{\overline{M}_g}$ is known to be big by \cite{hmu, eh, fa}, the minimal model of $\overline{M}_g$ exists by \cite[Lemma 10.1 and Thm. 1.2]{bchm}, whence the pull-back of $K_{\overline{M}_g}$ does have an $\RR$-CKM Zariski decomposition on some birational model of $\overline{M}_g$. On the other hand $\overline{M}_g$ is an interesting example of a normal projective variety whose canonical bundle does not have an $\RR$-CKM Zariski decomposition.

\begin{bcor}
\label{mod4}
Let $g$ be such that  $\kappa(\overline{M}_g) \geq 1$ (currently known for $g \geq 22$). Then there is no $K_{\overline{M}_g}$-non-positive projective birational morphism $f : \overline{M}_g \to X$ onto a normal $\QQ$-Gorenstein variety $X$ with $K_X$ nef. In particular, if $K_{\overline{M}_g}$ is big (currently known for $g \geq 24$ or $g = 22$), consider a rational map $f : \overline{M}_g \dashrightarrow (\overline{M}_g)_{min}$ to a minimal model obtained via contractions and flips of K-negative extremal rays. Then $f$ cannot be a morphism, that is, it is not possible to reach a minimal model of $\overline{M}_g$ only via contractions of extremal rays: at some step one must flip.
\end{bcor}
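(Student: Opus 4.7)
The plan is to argue by contradiction: assuming the existence of $f\colon\overline{M}_g\to X$ as in the statement, I will extract from $f$ an $\RR$-CKM Zariski decomposition of $K_{\overline{M}_g}$, invoke Corollary \ref{mod3} to force $K_{\overline{M}_g}$ to be a strict M-divisor, and finally contradict this by Mumford's canonical bundle formula.

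Since $X$ is $\QQ$-Gorenstein, $f^*K_X$ is a well-defined $\QQ$-Cartier $\QQ$-divisor on $\overline{M}_g$, and the difference $E:=K_{\overline{M}_g}-f^*K_X$ is supported on $\Exc(f)$, hence $f$-exceptional with vanishing $f$-pushforward. The $K_{\overline{M}_g}$-non positivity of $f$ says $K_{\overline{M}_g}\cdot C\leq 0$ for every curve $C$ contracted by $f$, while $f^*K_X\cdot C=0$ for such $C$; thus $-E$ is $f$-nef, and the negativity lemma for proper birational morphisms between normal varieties forces $E\geq 0$. Setting $P:=f^*K_X$ (nef, as pullback of the nef divisor $K_X$) and $N:=E$ (effective and $f$-exceptional) gives $K_{\overline{M}_g}=P+N$. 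The upgrade to an $\RR$-CKM Zariski decomposition requires the equality $h^0(\lfloor mK_{\overline{M}_g}\rfloor)=h^0(\lfloor mf^*K_X\rfloor)$ for all $m\in\NN$, and this reduces via the projection formula to the standard identity $f_*\O_{\overline{M}_g}(F)=\O_X$ for any effective $f$-exceptional divisor $F$, itself a consequence of the normality of $X$ and the fact that $\codim_X f(\Exc(f))\geq 2$.

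With the $\RR$-CKM Zariski decomposition in hand and $\kappa(\overline{M}_g)\geq 1$ by hypothesis, Corollary \ref{mod3} forces $K_{\overline{M}_g}$ to be a strict M-divisor. Mumford's formula gives $K_{\overline{M}_g}=13\lambda-2\delta_0-3\delta_1-2\delta_2-\cdots-2\delta_{\lfloor g/2\rfloor}$, so $a=13$ and $b_0=2$, and the strict Moriwaki inequality $a>\tfrac{8g+4}{g}b_0$ becomes $13>16+\tfrac{8}{g}$, which fails for every $g\geq 1$: contradiction, proving the first assertion. For the ``in particular'' part, if $K_{\overline{M}_g}$ is big and the MMP reaches $(\overline{M}_g)_{min}$ using only divisorial contractions of extremal rays (no flips), then the composition is a projective birational morphism $\overline{M}_g\to(\overline{M}_g)_{min}$ which is $K_{\overline{M}_g}$-non positive (each individual contraction contracts $K$-negative curves) and whose target is normal $\QQ$-Gorenstein with nef canonical bundle by construction of a minimal model; this is exactly the configuration excluded by the first part, so at some step a flip must be performed. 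The main obstacle is checking that the decomposition extracted from $f$ is a genuine $\RR$-CKM Zariski decomposition rather than merely a nef-plus-effective splitting; this hinges on the negativity lemma and on the exceptional-pushforward identity, both standard, while the remaining numerical verification against the Moriwaki inequality for $b_0$ is immediate.
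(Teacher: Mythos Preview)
Your proof is correct and follows essentially the same route as the paper's: extract from $f$ a nef-plus-effective decomposition $K_{\overline{M}_g}=f^*K_X+E$, recognize it as an $\RR$-CKM Zariski decomposition, and apply Corollary~\ref{mod3}. You simply supply more detail than the paper does---you invoke the negativity lemma to justify $E\geq 0$, you spell out the $h^0$-preservation via $f_*\O_{\overline{M}_g}(F)=\O_X$, and you check the failure of the strict Moriwaki inequality for $b_0$ explicitly---whereas the paper clears denominators by passing to $aK_{\overline{M}_g}$, asserts the decomposition ``immediately'', and refers back to the proof of Corollary~\ref{mod3} for the fact that $K_{\overline{M}_g}$ is not an M-divisor.
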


Note that, whenever $K_{\overline{M}_g}$ is big, we have that $\B_-(K_{\overline{M}_g}) = \B(K_{\overline{M}_g})$ (see Rmk. \ref{uni}).

\vspace{0.1cm}
Unless otherwise specified, we work throughout the paper over an algebraically closed field $k$ of characteristic $0$, although we expect that our results should hold  in  arbitrary characteristic (see \S\ref{char}).

\section{Generalities on Proj and Zariski decomposition}
\label{general}

We collect in this section some general facts that will be used in the proofs. They are all most likely well-known, but we include them for the lack of a reference (even though a similar version of Lemma \ref{proj} can be found in \cite[Lemma 1.6]{hk}).

Recall that, given a $\QQ$-Cartier $\QQ$-divisor $D$ on a normal projective variety $X$, its ring of sections is
\[ R(X, D) = \bigoplus\limits_{m \geq 0} H^0(X, \lfloor mD \rfloor) \]
 and if $mD$ is Cartier and $H^0(X, mD) \neq \{0\}$ then we denote by
\[ \varphi_{mD} : X \dashrightarrow Y_m \subseteq \PP H^0(X, mD) \]
 the map associated to $|mD|$, where $Y_m$ is the closure of its image (endowed with its reduced scheme structure).

%\begin{defn}
%Let $X$ be a normal projective variety, let $D$ be a $\QQ$-Cartier $\QQ$-divisor on $X$. We set
%\[ R(X, D) = \bigoplus\limits_{m \geq 0} H^0(X, \lfloor mD \rfloor) \]
%for the ring of sections and, if $mD$ is Cartier and $H^0(X, mD) \neq \{0\}$,
%\[ \varphi_m : X \dashrightarrow Y_m \subseteq \PP H^0(X, mD) \]
%for the map associated to $|mD|$, where $Y_m$ is the closure of its image (endowed with its reduced scheme structure).
%\end{defn}

\begin{lemma}
\label{proj}
Let $X$ be a normal projective variety defined over an algebraically closed field $k$ of arbitrary characteristic and let $D$ be a  $\QQ$-Cartier $\QQ$-divisor on $X$ such that $\kappa(X, D) \geq 0$ and $R(X, D)$ is a finitely generated $k$-algebra. Then there is $m_0 \in \NN$ such that $Y_{am_0} \cong \Proj(R(X, D))$ is normal for all $a \in \NN$. Moreover, with this identification, the standard rational map associated to the construction of $\Proj$ (see \cite[\href{http://stacks.math.columbia.edu/tag/01NK}{Tag 01NK}]{st}), $f_D : X \dashrightarrow \Proj(R(X, D))$ coincides with $\varphi_{am_0D} : X \dashrightarrow Y_{am_0}$ for all $a \in \NN$.
\end{lemma}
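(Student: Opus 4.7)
The plan is to choose $m_0$ sufficiently divisible so that the Veronese subring $R^{(m_0)} := \bigoplus_{k \geq 0} H^0(X, km_0 D)$ is generated in degree one as a $\CC$-algebra, and then to identify both $Y_{km_0}$ and $\Proj(R(X,D))$ with the image of the induced closed embedding into $\PP H^0(X, km_0 D)$.

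First I would invoke the standard fact that, since $R(X,D)$ is a finitely generated $\NN$-graded $\CC$-algebra, some Veronese subring of it is generated in degree one; this produces $m_0 \in \NN$ such that $m_0 D$ is Cartier and $R^{(m_0)}$ is standard graded. For every $k \geq 1$, $R^{(km_0)}$ is the $k$-th Veronese of $R^{(m_0)}$, hence also standard graded, and there are canonical isomorphisms $\Proj(R^{(km_0)}) \iso \Proj(R^{(m_0)}) \iso \Proj(R(X,D))$. The surjection $\Sym^\bullet H^0(X, km_0 D) \twoheadrightarrow R^{(km_0)}$ then gives a closed embedding $\iota_k : \Proj(R(X,D)) \hookrightarrow \PP H^0(X, km_0 D)$. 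Since $R(X,D)$ embeds into the function field of $X$ (after fixing a rational trivialization of $\O_X(m_0 D)$), it is a domain, so $\Proj(R(X,D))$ is integral; as a reduced closed subscheme of $\PP H^0(X, km_0 D)$ with the same support as the closure of the image of $\varphi_{km_0}$, it coincides with $Y_{km_0}$.

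For normality it suffices to show that $R^{(m_0)}$ is integrally closed in its fraction field. The argument I have in mind: given a homogeneous $\alpha \in \operatorname{Frac}(R^{(m_0)})$ of degree $d$ satisfying $\alpha^n + a_1 \alpha^{n-1} + \cdots + a_n = 0$ with $a_i \in (R^{(m_0)})_{di} = H^0(X, d i m_0 D)$, view $\alpha$ as a rational section of $\O_X(d m_0 D)$. Trivializing $\O_X(d m_0 D)$ on an affine open $U \sub X$ converts the relation into an integrality equation for a rational function over $\O_X(U)$; normality of $X$ then forces $\alpha$ to be regular on each such $U$, so $\alpha \in H^0(X, d m_0 D) \sub R^{(m_0)}$. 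Hence $R^{(m_0)}$ is normal, and so is $\Proj(R(X,D))$.

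Finally, the equality $f_D = \varphi_{km_0}$ amounts to unwinding the construction in \cite[Tag 01NK]{st}: away from the base locus of $|km_0 D|$, both maps are defined by evaluating a basis of $H^0(X, km_0 D)$, which generates $R^{(km_0)}$ in degree one, so composing $\varphi_{km_0}$ with the identification $Y_{km_0} \iso \Proj(R(X,D))$ furnished by $\iota_k$ recovers $f_D$. The step I expect to require the most care is the integrality argument giving normality of $R^{(m_0)}$; the rest is formal bookkeeping with graded rings and their $\Proj$.
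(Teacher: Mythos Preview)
Your argument is correct, but it follows a genuinely different route from the paper's.

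The paper proceeds geometrically: it resolves the indeterminacy of $\varphi_s$ via a birational morphism $p:\widetilde X\to X$, writes $p^*(sD)=M+F$ with $|M|$ base-point free, and then invokes the semiample fibration theorem (Lazarsfeld, Thm.~2.1.27) to obtain an algebraic fiber space $\phi:\widetilde X\to Z$ with $\varphi_{hM}=\phi$ for $h\gg 0$. Normality of $Z=Y_{km_0}$ is then read off from the fact that $\phi$ is an algebraic fiber space from a normal variety (Lazarsfeld, Thm.~2.1.15), and the identification $\Proj(R(X,D))\cong Z$ is obtained by pushing an ample class down along $\phi$ and comparing section rings.

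Your approach is purely algebraic: you pass to a Veronese subring generated in degree one, embed $\Proj$ directly in $\PP H^0(X,km_0D)$ via the surjection from the symmetric algebra, and prove normality of $\Proj$ by showing that $R^{(m_0)}$ itself is integrally closed. The latter step is the standard fact (essentially Demazure's observation) that the section ring of a line bundle on a normal variety is normal; your sketch of the argument---clearing denominators in the integral equation, trivializing locally, and invoking normality of $X$---is correct. Two small points worth making explicit: the reduction to \emph{homogeneous} integral elements uses that the integral closure of a graded domain is graded, and one should note that for degree $d\leq 0$ the conclusion is immediate since $R_0=\CC$.

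Your route is more elementary in that it avoids resolution of indeterminacy and the semiample fibration machinery; the paper's route, on the other hand, exhibits $Y_{km_0}$ concretely as the target of an algebraic fiber space, which makes the geometry of $f_D$ (e.g.\ connectedness of fibers) transparent.
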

\begin{proof}
By \cite[Prop. 2.4.7(i)]{ega2} we can assume that $D$ is Cartier. By \cite[Lemma 2.1.6(v)]{ega2} there exists $s \in \NN$ such that
\begin{equation}
\label{gen}
S^h H^0(X, sD) \to H^0(X, hsD) \mbox{ is surjective for all } h \in \NN.
\end{equation}
Since $\kappa(X, D) \geq 0$, we get that $H^0(X, sD) \neq \{0\}$ and that
$\B(D) = \Bs(|hsD|)$ for all $h \in \NN$. Let $p : \widetilde{X} \to X$ be the normalized blow-up of $X$ along the base ideal of $|sD|$, so that we have a diagram
\[ \xymatrix{ \widetilde{X} \ar_{p}[d] \ar^{q}[rd]  \\ X \ar@{-->}_{\hskip -.2cm \varphi_{sD}}[r] & Y_s } \]
with $\widetilde{X}$ normal and $p$ birational. We can write
\begin{equation}
\label{dec}
p^{\ast}(sD) = M + F
\end{equation}
with $|M|$ base-point free, $F$ base component of $|p^{\ast}(sD)|$ and $p(\Supp(F)) = \B(D)$. Since $X$ is normal and $p$ is birational we have, by Zariski's Main Theorem,  that $p$ is an algebraic fiber space \cite[Def. 2.1.11]{l1} and therefore, for all $h \in \NN$,
\begin{equation}
\label{iso1}
H^0(X, hsD) \cong H^0(\widetilde{X}, p^{\ast}(hsD)).
\end{equation}
It follows by finite generation that, for all $h \in \NN$, $hF$ is the base component of $|p^{\ast}(hsD)|$,
whence
\begin{equation}
\label{iso2}
H^0(\widetilde{X}, p^{\ast}(hsD)) \cong H^0(\widetilde{X}, hM).
\end{equation}
But then $Y_{hs} = \Im \{\varphi_{hM} : \widetilde{X} \to \PP H^0(\widetilde{X}, hM) \}$ for all $h \in \NN$. On the other hand, by \cite[Thm. 2.1.27]{l1}, there is $h_0 \in \NN$ and an algebraic fiber space $\phi: \widetilde{X} \to Z$  such that  $\varphi_{hM} = \phi$ and $\Im \varphi_{hM} = Z$ for all $h \geq h_0$. Now $Z$ is normal by \cite[Thm. 2.1.15]{l1}, whence setting $m_0 = h_0s$ we get that $Y_{am_0} = Z$ is normal for all $a \in \NN$.

Let $A$ be an ample divisor on $Z$ such that $h_0M = \phi^{\ast}(A)$. As $\phi$ is an algebraic fiber space, we get
\begin{equation}
\label{iso3}
H^0(\widetilde{X}, sh_0M) = H^0(\widetilde{X}, \phi^{\ast}(sA)) \cong H^0(Z, sA).
\end{equation}
Since the product in a ring of sections is given by multiplication of sections, we deduce by \eqref{iso1}, \eqref{iso2} and \eqref{iso3}, that $R(X, m_0D) \cong R(\widetilde{X}, p^{\ast}(m_0D)) \cong R(\widetilde{X}, h_0M) \cong R(\widetilde{X}, \phi^{\ast}(A)) \cong R(Z, A)$. Finally by \cite[Prop. 2.4.7(i)]{ega2} we get
$\Proj(R(X, D)) \cong \Proj(R(X, m_0D)) \cong \Proj(R(Z, A)) \cong Z$ since $A$ is ample.

By \cite[\href{http://stacks.math.columbia.edu/tag/01NK}{Tag 01NK}]{st}, given a graded ring $S$, a scheme $T$ with a line bundle $\L$ and a homomorphism of graded rings $\psi: S \to R(T, \L)$, there is a morphism
\[ f_D : U(\psi) \to \Proj(R(X, D)) \]
where $U(\psi)$ is the union of the open subsets $T_{\psi(f)}$, with $f \in S_d, d > 0$. In our case, setting $T = X, \L = \O_X(D), S =  R(X, D)$ and $\psi = \Id_{R(X, D)}$, we have that $U(\psi) = X - \B(D)$ and we get  a rational map $f_D : X \dashrightarrow \Proj(R(X, D))$ defined on $X - \B(D)$. On the other hand, for any $d \in \NN$ such that $\B(D) = \Bs(|dD|)$, by \cite[\href{http://stacks.math.columbia.edu/tag/01NK}{Tag 01NK}]{st}, we have that $f_D$ coincides on $X - \B(D)$ with the morphism $X - \B(D) \to \Proj(R(X, D))$ defined on \cite[\href{http://stacks.math.columbia.edu/tag/01N8}{Tag01N8}]{st}, which, given the immersion $ \Proj(R(X, D)) \subset \PP^r$, $r = h^0(X, dD)$, is just the morphism $\varphi_{dD}$.
\end{proof}

We draw a consequence on the spaces $\ov{M}_g(\alpha)$.

\begin{cor}
\label{ka}
For every $\alpha \in [0, 1] \cap \QQ$ we have that $\overline{M}_g(\alpha)$ is normal and the rational map $f_{\alpha} : \overline{M}_g \dashrightarrow \overline{M}_g(\alpha)$ is given by $\varphi_{m(13 \lambda - (2-\alpha) \delta)}$ for $m$ sufficiently divisible.
\end{cor}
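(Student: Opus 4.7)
The plan is to verify the hypotheses of Lemma \ref{proj} for the $\QQ$-divisor $D_\alpha := 13\lambda - (2-\alpha)\delta$ on $\overline{M}_g$ and let the lemma do the work, since $\overline{M}_g(\alpha)$ is defined as $\Proj(R(\overline{M}_g, D_\alpha))$. The two things I would need to check are that $\kappa(\overline{M}_g, D_\alpha) \geq 0$ and that $R(\overline{M}_g, D_\alpha)$ is a finitely generated $\CC$-algebra.

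The main (and essentially only) nontrivial input is finite generation: this is supplied by the log minimal model program for $\overline{M}_g$, carried out for $\alpha \in [0,1]\cap \QQ$ by Hassett, Hyeon and collaborators beginning with \cite{hh1} and \cite{hh2}, and it is precisely what makes the definition of $\overline{M}_g(\alpha)$ as a projective variety meaningful in the introduction. Granting finite generation, the ring $R(\overline{M}_g, D_\alpha)$ has nonzero elements in arbitrarily large positive degree (otherwise $\overline{M}_g(\alpha)$ would be empty rather than birational to $\overline{M}_g$), so $\kappa(\overline{M}_g, D_\alpha) \geq 0$ follows at once.

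With both hypotheses in place, Lemma \ref{proj} supplies an integer $m_0 \in \NN$ such that for every $k \in \NN$ the image $Y_{km_0}$ of $\varphi_{km_0 D_\alpha}$ is normal and canonically isomorphic to $\Proj(R(\overline{M}_g, D_\alpha)) = \overline{M}_g(\alpha)$, and such that under this identification the standard rational map $f_{D_\alpha}: \overline{M}_g \dashrightarrow \Proj(R(\overline{M}_g, D_\alpha))$ coincides with $\varphi_{km_0 D_\alpha}$. Since $f_{D_\alpha}$ is, by the definition recalled in the introduction, exactly the map $f_\alpha$, both assertions of the corollary follow, the phrase ``$m \gg 0$ and divisible'' being realized by the integers of the form $m = k m_0$.
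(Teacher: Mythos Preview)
Your overall strategy matches the paper's: reduce to Lemma~\ref{proj} by checking $\kappa \geq 0$ and finite generation of $R(\overline{M}_g, D_\alpha)$. The gap is in the justification of finite generation. The references \cite{hh1}, \cite{hh2} do \emph{not} establish that $R(\overline{M}_g, D_\alpha)$ is finitely generated for every $\alpha \in [0,1]\cap\QQ$; those papers construct $\overline{M}_g(\alpha)$ explicitly only for $\alpha$ in small initial ranges (the first divisorial contraction and the first flip), and the Hassett--Hyeon program for arbitrary $\alpha$ is not complete. The paper argues instead as follows. For $\alpha = 1$ the divisor $13\lambda - \delta$ is ample by \cite{ch}, so finite generation is immediate. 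For $\alpha < 1$ one writes
\[
D_\alpha = K_{\overline{M}_g} + B_\alpha, \qquad B_\alpha = \alpha(\Delta_0 + \Delta_2 + \cdots + \Delta_{\lfloor g/2 \rfloor}) + \tfrac{\alpha+1}{2}\Delta_1,
\]
checks that the pair $(\overline{M}_g, B_\alpha)$ is klt (this is where \cite{hh1} is actually invoked, for the singularity computation), and then applies \cite[Cor.~1.1.2]{bchm} to obtain finite generation of the adjoint ring. Note that the split at $\alpha = 1$ is genuinely needed: the coefficients of $B_\alpha$ reach $1$ there, so the pair is only log canonical and BCHM does not apply directly.

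Your treatment of $\kappa \geq 0$ is slightly circular---you deduce that the ring is nontrivial in positive degree by assuming $\overline{M}_g(\alpha)$ is birational to $\overline{M}_g$, which is not yet known---but this is harmless: as the paper does, one may simply assume $\kappa(D_\alpha) \geq 0$, since otherwise the Proj is empty and there is nothing to prove.
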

\begin{proof}
Set $K_{\alpha} = 13 \lambda - (2-\alpha) \delta$. We can assume that $\kappa(K_{\alpha}) \geq 0$. If $\alpha = 1$ the assertion follows by \cite[Cor. 5.18]{mum} (see also \cite[Thm. 1.3]{ch}), as $13 \lambda - \delta$ is ample. Now assume $\alpha < 1$ and set $B_{\alpha} = \alpha(\Delta_0+\Delta_2+\ldots+\Delta_{\lfloor g/2 \rfloor})+\frac{\alpha+1}{2}\Delta_1$, so that $K_{\alpha} = K_{\ov M_g}+B_{\alpha}$ and $(\ov{M}_g, B_{\alpha})$ is klt by \cite[Proof of Prop. A.13]{hh1} or \cite[Proof of Cor. 1.2.1]{bchm}. Then $R(\ov{M}_g, K_{\alpha})$ is a finitely generated $k$-algebra by \cite[Cor. 1.1.2]{bchm} and we just apply Lemma \ref{proj}.
\end{proof}

We also need a result about Zariski decompositions.

\begin{lemma}
\label{B+zd}
Let $X$ be a normal $\QQ$-factorial projective variety defined over an algebraically closed field $k$ of arbitrary characteristic, let $D$ be an $\RR$-divisor on $X$ having an $\RR$-CKM Zariski decomposition $D=P+N$.

Then $\B_+(D)=\B_+(P)$ and $\Supp(N)\subseteq \B_+(D)$.
\end{lemma}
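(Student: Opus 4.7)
The plan is to exploit the CKM identity $h^0(\lfloor mD\rfloor)=h^0(\lfloor mP\rfloor)$ to force every effective representative of the class $D-A$ (for $A$ ample) to be bounded below by $N$, translate this into an explicit formula for $\B(D-A)$, intersect over $A$ to obtain $\B_+(D)=\B_+(P)\cup\Supp(N)$, and finally remove the residual $\Supp(N)$ term by means of a volume argument resting on Nakamaye's theorem.

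The technical core is the pointwise claim: for every ample $\RR$-divisor $A$ on $X$ and every effective $\RR$-divisor $E\sim_\RR D-A$ we have $E\geq N$. To prove it I would first pick an effective $\RR$-representative $A'\sim_\RR A$ whose support contains no prime component of $\Supp(N)$; this is possible because $A$ is ample, by writing $A$ as a positive $\RR$-combination of very ample $\QQ$-divisors and choosing general members. Then $E+A'$ is effective and $\sim_\RR D$, and the CKM condition (implemented, after passing to sufficiently divisible integer multiples and using that $X$ is $\QQ$-factorial, as an isomorphism $H^0(\lfloor mP\rfloor)\longrightarrow H^0(\lfloor mD\rfloor)$ given by multiplication by the defining section of $mN$) forces $E+A'\geq N$; evaluating multiplicities at each prime component of $\Supp(N)$, where $A'$ has zero multiplicity by construction, then yields $E\geq N$. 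As a consequence the rules $E\mapsto E-N$ and $F\mapsto F+N$ are mutually inverse bijections between effective representatives of $D-A$ and of $P-A$; since $\Supp(E)=\Supp(E-N)\cup\Supp(N)$, intersecting supports gives $\B(D-A)=\B(P-A)\cup\Supp(N)$ for every ample $A$. Intersecting once more over ample $A$ yields $\B_+(D)=\B_+(P)\cup\Supp(N)$, which is already the second assertion $\Supp(N)\subseteq\B_+(D)$.

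The main obstacle is then the reverse inclusion needed to collapse $\B_+(P)\cup\Supp(N)$ down to $\B_+(P)$, namely $\Supp(N)\subseteq\B_+(P)$. If $P$ is not big this is trivial because $\B_+(P)=X$ (and the CKM identity forces $D$ and $P$ to share the same volume, so $D$ is big if and only if $P$ is). Assuming $P$ big and nef, I would invoke Nakamaye's theorem to identify $\B_+(P)$ with the null locus $\Null(P)=\bigcup\{V\subseteq X:P^{\dim V}\cdot V=0\}$, and argue by contradiction: if some prime component $C\subseteq\Supp(N)$ of coefficient $a>0$ were not contained in $\Null(P)$, then $P^{\dim X-1}\cdot C>0$, so by differentiability of the volume function at the big nef class $P$ we would have $\operatorname{vol}(P+tC)>\operatorname{vol}(P)$ for every $t>0$. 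Monotonicity of the volume under addition of effective divisors would then give $\operatorname{vol}(D)=\operatorname{vol}(P+N)\geq\operatorname{vol}(P+aC)>\operatorname{vol}(P)$, contradicting the equality of volumes imposed by the CKM condition.
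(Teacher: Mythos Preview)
Your overall architecture --- first establishing $\B_+(D)=\B_+(P)\cup\Supp(N)$, then proving $\Supp(N)\subseteq\B_+(P)$ --- matches the paper's, but the execution of both halves differs, and the first half has a genuine gap.

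The gap is in your derivation of $E+A'\geq N$ directly from the CKM identity $h^0(\lfloor mD\rfloor)=h^0(\lfloor mP\rfloor)$. Your mechanism needs $E+A'$ to produce a section of $\lfloor mD\rfloor$ for some $m$, which requires $E+A'\sim_\QQ D$, not merely $\sim_\RR D$. Since the lemma is stated for $\RR$-divisors and $N$ need not have rational coefficients, ``passing to sufficiently divisible integer multiples'' does not yield a defining section of $mN$, and an $\RR$-linearly equivalent effective divisor simply does not correspond to a global section. The paper circumvents this by invoking Nakayama's $\sigma$-decomposition: one has $N=N_\sigma(D)$ \cite[Rmk.~III.1.17(3)]{nak}, so $\ord_\Gamma N=\sigma_\Gamma(D)$ for every prime $\Gamma$; then for any ample $A$ with $D-A\geq 0$, subadditivity and $\sigma_\Gamma(A)=0$ give $\ord_\Gamma(D-A)\geq\sigma_\Gamma(D-A)\geq\sigma_\Gamma(D)=\ord_\Gamma N$, i.e.\ $D-A\geq N$. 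This feeds directly into the characterisation $\B_+(D)=\bigcap_{A\text{ ample},\,D-A\geq 0}\Supp(D-A)$ and yields $\B_+(D)=\Supp(N)\cup\B_+(P)$ without ever touching arbitrary $\RR$-representatives.

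For the inclusion $\Supp(N)\subseteq\B_+(P)$, your Nakamaye-plus-volume argument is genuinely different from the paper's and is correct in outline. The paper stays entirely within Nakayama's $\sigma$-calculus: given a prime $\Gamma\subseteq\Supp(N)$ and an ample $H$ with $H-\Gamma$ ample, it chooses $\varepsilon>0$ with $\varepsilon\leq\sigma_\Gamma(D)$ and $\B_+(P)=\B(P-\varepsilon(H-\Gamma))$, then computes $\sigma_\Gamma(P-\varepsilon(H-\Gamma))\geq\sigma_\Gamma(P+\varepsilon\Gamma)=\varepsilon>0$ via \cite[Lemmas~III.1.4 and III.1.8]{nak}, placing $\Gamma$ in $\B_+(P)$. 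Your route is more conceptual but imports heavier machinery --- Nakamaye's identification $\B_+(P)=\Null(P)$ and differentiability of the volume function --- both of which are usually proved on smooth varieties; you would need either to cite extensions to the normal $\QQ$-factorial setting (they exist, e.g.\ Birkar or \cite{cl} for Nakamaye) or to pull back to a resolution and run the volume computation there via the projection formula. The paper's $\sigma$-function manipulation avoids these technicalities and uses nothing beyond Nakayama's elementary lemmas.
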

\begin{proof}
We will use some results in \cite{nak}, \cite{bbp}, \cite{elmnp} and \cite{p}. We point out that, even though in the above references the results mentioned below are proved for smooth varieties over the complex numbers, the results hold with minor modifications on a normal $\QQ$-factorial projective variety defined over an algebraically closed field.

If $D$ is not big then $P$ is also not big, so that $\B_+(D)=\B_+(P)=X$.

Suppose now that $D$ is big, so that $P$ is also big by \cite[Thm. II.3.7 and Lemma II.3.16]{nak}. Given any prime divisor $\Gamma$ on $X$, one can define, as in \cite[Def. III.1.1]{nak},
\[ \sigma_{\Gamma}(D) = \inf\{\ord_{\Gamma}("E), E \ \mbox{effective} \ \RR\mbox{-divisor on X such that} \ E \equiv D \} \]
and, for any pseudoeffective $\RR$-divisor $F$ on $X$, as in \cite[Def. III.1.6]{nak},
\[  \sigma_{\Gamma}(F) = \lim_{\varepsilon \to 0^+} \sigma_{\Gamma}(F + \varepsilon A) \]
where  $A$  is an ample divisor (the definition does not depend on the choice of $A$). Now set
\[ N_\sigma(D) = \sum_{\Gamma} \sigma_{\Gamma}(D) \Gamma, \ P_\sigma(D) = D - N_\sigma(D). \]
Note that $N_\sigma(D)$ is an $\RR$-divisor by \cite[Cor. III.1.11]{nak}. The decomposition $D = P_\sigma(D) + N_\sigma(D)$ is called the $\sigma$-decomposition of $D$ (see \cite[Def. III.1.12]{nak}). By \cite[Rmk. III.1.17(3)]{nak} or \cite[Rmk.7.2 and Prop. 4.18]{p}, it follows  that $P = P_\sigma(D)$ and $N = N_\sigma(D)$. We also recall that $\Supp(N_\sigma(D)) \subseteq \B(D)$ (see \cite[Lemma 2.6]{bbp}).

Given any ample $\RR$-divisor $A$ on $X$ such that $A\leq D$, we find, by \cite[Lemma III.1.4]{nak}), that $\sigma_{\Gamma}(D) \leq \sigma_{\Gamma}(D-A) + \sigma_{\Gamma}(A) =  \sigma_{\Gamma}(D-A) \leq  \ord_{\Gamma}(D-A)$, whence $D-A\geq N$.
Therefore
$$\B_+(D)=\bigcap_{A\leq D} \Supp(D-A)= \bigcap_{A\leq D}(\Supp(D-A-N)\cup \Supp(N))=$$
$$=  \Supp(N) \cup \bigcap_{A\leq P} \Supp(P-A) =  \Supp(N) \cup \B_+(P).$$
Now let $\Gamma$ be a prime divisor in the support of $N$, so that $\sigma_{\Gamma}(D) > 0$. We will prove that $\Gamma \subseteq \B_+(P)$.
Let $H$ be an ample Cartier divisor such that $H-\Gamma$ is ample.
Then there exists $\varepsilon>0$ sufficiently small such that $\varepsilon \leq \sigma_\Gamma(D)$,  $\B_+(P)=\B(P-\varepsilon(H-\Gamma))$ by \cite[Prop. 1.5]{elmnp} (note that it is not needed that $P-\varepsilon(H-\Gamma)$ is a $\QQ$-divisor) and $P-\varepsilon H$ is big.
By \cite[Lemmas III.1.8 and III.1.4]{nak} we get
\[ 0 < \varepsilon = \sigma_{\Gamma}(P+\varepsilon \Gamma)\leq \sigma_{\Gamma}(P-\varepsilon(H-\Gamma))+ \sigma_{\Gamma}(\varepsilon H)=\sigma_{\Gamma}(P-\varepsilon(H-\Gamma)) \]
so that $\Gamma \subseteq \B(P-\varepsilon(H-\Gamma))=\B_+(P)$.
\end{proof}

\section{Proofs of the main results}
\label{moduli}

\renewcommand{\proofname}{Proof of Theorem {\rm \ref{main}}}
\begin{proof}
We begin by recalling some results of Moriwaki \cite{m1}. In \cite[Lemma 4.1]{m1}\footnote{which works over an algebraically closed field of characteristic different from two (due to the use of double covers).}, Moriwaki showed that there exist integral curves  $C, C_0, \ldots, C_{\lfloor g/2 \rfloor}$ inside $\ov{M}_g$, not entirely contained in the boundary $\partial \overline{M}_g$, with the following properties:
\begin{itemize}
\item $C$ is contained inside $M_g$;
\item $C_0$ is contained in $\ov H_g$ and intersects $\partial \ov M_g$ in points corresponding to isomorphism classes of irreducible curves with a single node;
\item For every $i=1,\ldots, \lfloor g/2 \rfloor$, $C_i$ is contained in $\ov H_g$ and intersects $\partial \ov M_g$ in points corresponding to isomorphism classes of stable curves formed by two irreducible components of genus $i$ and $g-i$ meeting in a single node.
\end{itemize}
It follows from the proof of \cite[Prop. 4.2]{m1}\footnote{which works over  an algebraically closed field of arbitrary characteristic using the  extension of the result of Cornalba-Harris \cite[Prop. 4.7]{ch} to arbitrary characteristic obtained by Yamaki in \cite[Thm. 1.7]{yam}}  that the cone spanned by $C, C_0,\ldots, C_{\lfloor g/2 \rfloor}$ inside $N_1(\ov M_g)_{\RR}$ is the dual of the cone of M-divisors.

Consider now an $\RR$-divisor $D$ on $\ov M_g$ such that $\B_-(D) \subseteq \partial \overline{M}_g$ (respectively $\B_+(D) \subseteq \partial \overline{M}_g$)
and let $\gamma$ be one of the curves $C, C_0, \ldots, C_{\lfloor g/2 \rfloor}$. Since $\gamma \not\subseteq \partial \overline{M}_g$, we get that $\gamma \not\subseteq \B_-(D)$
(respectively $\gamma \not\subseteq \B_+(D)$) and therefore $D \cdot \gamma \geq 0$
(respectively $D_{|\gamma}$ is big, that is $D \cdot \gamma > 0$). This shows that $D$ is an M-divisor (respectively a strict M-divisor).

Vice versa suppose first that $D$ is an M-divisor. As observed in the introduction, it follows from \eqref{morineq} that every M-divisor is an effective linear combination of the Moriwaki divisor $M$ and the boundary divisors.
Hence there exists $\beta \geq 0$ and an effective $\RR$-divisor $E$ on $\overline{M}_g$ such that $D = \beta M + E$ and $\Supp(E) \subseteq  \partial \overline{M}_g$,  where $M$ is the Moriwaki divisor as in \eqref{mordiv}.

We recall that the content of \cite[Thm. B]{m1} is exactly that $\B_-(M) \subseteq \partial \overline{M}_g$; hence
\[ \B_-(D) \subseteq \B_-(M) \cup \Supp(E) \subseteq \partial \overline{M}_g. \]
Moreover, if $D$ is a strict M-divisor, we can choose a sufficiently small ample $\RR$-divisor $A$ on $\overline{M}_g$ such that $D' := D - 2A$ is still a strict M-divisor and $\B_+(D) = \B(D - A)$ by \cite[Prop. 1.5]{elmnp} (note that it is not needed that $D-A$ is a $\QQ$-divisor). Then there exists $\beta' > 0$ and an effective $\RR$-divisor $E'$ on $\overline{M}_g$ such that $D' = \beta' M + E'$ and $\Supp(E') \subseteq  \partial \overline{M}_g$. Hence
\[ \B_-(D') \subseteq \B_-(M) \cup \Supp(E') \subseteq \partial \overline{M}_g \]
therefore also
\[ \B_+(D) = \B(D - A) = \B(D' + A) \subseteq \B_-(D') \subseteq \partial \overline{M}_g. \]
\end{proof}
\renewcommand{\proofname}{Proof}

We note that, for some divisors, we can compute exactly the augmented base locus.
\begin{prop}
\label{mod5}
Let $g \geq 3$ and let $D \sim a \lambda - b_0 \delta_0 - \ldots - b_{\lfloor g/2 \rfloor} \delta_{\lfloor g/2 \rfloor}$ be a big $\RR$-divisor on $\overline{M}_g$ with $b_i \leq 0$ for all $i = 0, \ldots , \lfloor g/2 \rfloor$. Then $\B_+(D) = \partial \overline{M}_g$. Moreover if $D$ is a $\QQ$-divisor then, for $m \gg 0$ sufficiently divisible, $\varphi_{mD}$ is the Torelli morphism to the Satake compactification $\overline{M}_g^S:=\Proj (R(\ov{M}_g, \lambda))$ of $M_g$, which is a normal variety.
\end{prop}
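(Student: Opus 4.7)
Write $D=a\lambda+E$ with $E:=\sum_{i=0}^{\lfloor g/2\rfloor}(-b_i)\delta_i$ effective and supported on $\partial \overline{M}_g$. Any effective $\RR$-divisor supported on $\partial \overline{M}_g$ has Iitaka dimension zero (only constants are regular on $M_g$, by completeness and normality of $\overline{M}_g$), so $D$ big forces $a>0$. Together with $b_i\leq 0$ this makes every Moriwaki inequality strict: $a>0$, $a>\tfrac{8g+4}{g}b_0$ and $a>\tfrac{2g+1}{i(g-i)}b_i$. Thus $D$ is a strict M-divisor and Theorem \ref{main}(ii) immediately yields $\B_+(D)\subseteq \partial \overline{M}_g$.

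The remaining assertions rest on identifying $D$ with $a\lambda$ at the level of sections. Since $\lambda$ is semiample, it defines the Torelli morphism $\pi\colon \overline{M}_g\to \overline{M}_g^S=\Proj R(\overline{M}_g,\lambda)$, which is birational by the strong Torelli theorem. A direct boundary-fiber analysis shows that $\pi$ contracts each $\delta_i$ onto a subvariety of dimension $3g-6$: a generic fiber of $\pi|_{\delta_0}$ is the two-dimensional choice of gluing points on the fixed normalization of genus $g-1$, and for $i\geq 1$ a generic fiber of $\pi|_{\delta_i}$ is the two-dimensional choice of gluing points on the fixed smooth components of genera $(i,g-i)$. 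Hence for $g\geq 3$ the image $\pi(\partial \overline{M}_g)$ has codimension $\geq 2$ in the normal variety $\overline{M}_g^S$ of dimension $3g-3$, and the exceptional locus of $\pi$ is exactly $\partial\overline{M}_g$. I expect this codimension bound to be the main step, the rest being formal.

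I would then show $\pi_*\O_{\overline{M}_g}(mE)=\O_{\overline{M}_g^S}$ for every $m\geq 0$: it contains $\pi_*\O_{\overline{M}_g}=\O_{\overline{M}_g^S}$ (birational map to a normal target), and a local section on $U\subseteq \overline{M}_g^S$ is a rational function with poles in $\pi(\partial \overline{M}_g)\cap U$, hence regular by Hartogs' theorem applied to the codimension $\geq 2$ bound. Fixing $k>0$ with $k\lambda=\pi^*L$, $L$ ample, and taking $m$ sufficiently divisible so that $ma\lambda=\pi^*L_m$ with $L_m$ ample, the projection formula gives
\[ H^0(\overline{M}_g,mD)=H^0(\overline{M}_g^S,L_m\otimes\pi_*\O(mE))=H^0(\overline{M}_g^S,L_m)=H^0(\overline{M}_g,ma\lambda). \]
This exhibits $D=a\lambda+E$ as an $\RR$-CKM Zariski decomposition, identifies $R(\overline{M}_g,D)$ with $R(\overline{M}_g,\lambda)$ up to Veronese subrings, and, via Lemma \ref{proj}, shows $\varphi_{mD}=\pi$ for $m$ sufficiently divisible, proving the second assertion.

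Finally, Lemma \ref{B+zd} applied to this Zariski decomposition gives $\B_+(D)=\B_+(a\lambda)=\B_+(\lambda)$. Since $\lambda$ is nef and big (maximal Iitaka dimension, as $\pi$ is birational), Nakamaye's theorem identifies $\B_+(\lambda)$ with $\Null(\lambda)=\Exc(\pi)=\partial\overline{M}_g$, yielding $\B_+(D)=\partial\overline{M}_g$.
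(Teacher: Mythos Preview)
Your argument is correct in outline but takes a longer route than the paper's. The paper does not invoke Theorem~\ref{main}, Lemma~\ref{B+zd}, or Nakamaye at all: once $\Exc(\pi)=\partial\overline{M}_g$ is known (simply cited from \cite{acg}), a single application of \cite[Prop.~2.3]{bbp} yields
\[
\B_+(D)=\B_+\bigl(\pi^*(aA)+F\bigr)=\pi^{-1}\bigl(\B_+(aA)\bigr)\cup\Exc(\pi)=\partial\overline{M}_g
\]
in one stroke, where $\lambda=\pi^*A$ with $A$ ample and $F=E$ is effective $\pi$-exceptional. Your path --- first $\B_+(D)\subseteq\partial\overline{M}_g$ via Theorem~\ref{main}, then equality via a Zariski decomposition, Lemma~\ref{B+zd}, and Nakamaye --- also works, but the first inclusion becomes redundant once you have the last paragraph, and Nakamaye on the singular variety $\overline{M}_g$ needs an extended version (e.g.\ \cite{cl}, or simply \cite[Prop.~2.3]{bbp} again with $F=0$). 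On the plus side, your explicit fiber-dimension analysis is a concrete substitute for the citation the paper uses for $\Exc(\pi)=\partial\overline{M}_g$, and your projection-formula derivation of $H^0(mD)\cong H^0(ma\lambda)$ is the paper's one-line isomorphism unpacked.

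One genuine slip: ``only constants are regular on $M_g$, by completeness and normality of $\overline{M}_g$'' is a non-sequitur --- the complement of a divisor in a normal projective variety can have plenty of regular functions. The fact you want follows precisely from the codimension-$\geq 2$ and Hartogs argument you give two paragraphs later (equivalently, $H^0(\overline{M}_g,mE)=H^0(\overline{M}_g^S,\pi_*\O(mE))=\CC$). So the deduction $a>0$ is ultimately justified, but as written the justification is out of order; either postpone it or cite the fact about $M_g$ directly.
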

\begin{proof}
Recall that $\lambda$ is semiample, whence, by \cite[Thm. 2.1.15 and 2.1.27]{l1}, we get an algebraic fiber space $\pi = \varphi_{m \lambda} :  \overline{M}_g \to \Im \varphi_{m\lambda} \cong \ov{M}_g^S$ for $m \gg 0$ sufficiently divisible (this is the Torelli morphism to the Satake compactification)  and that $\ov{M}_g^S$ is normal. Moreover, it is well-known that $\Exc(\pi) = \partial \overline{M}_g$ (see e.g. \cite[Chap. XIV, \S 5]{acg}).

Notice that the restriction of $D$ to $M_g$ is linearly equivalent to $a\lambda$. Since $D$ is big and $\lambda$ is semiample, this implies that $a > 0$. Let $A$ be an ample $\QQ$-divisor such that $\lambda = \pi^{\ast} A$ and set $F = - b_0 \delta_0 - \ldots - b_{\lfloor g/2 \rfloor} \delta_{\lfloor g/2 \rfloor}$, so that $F$ is effective and $\pi$-exceptional. As $\overline{M}_g$ and $\ov{M}_g^S$ are normal and $\pi$ is birational, we can apply \cite[Prop. 2.3]{bbp}:
\[ \B_+(D) = \B_+(\pi^{\ast} (aA) + F) = \pi^{-1}(\B_+(aA)) \cup \Exc(\pi) = \Exc(\pi) =  \partial \overline{M}_g. \]
Now if $D$ is a $\QQ$-divisor and $m \gg 0$ is such that $mD$ and $m a \lambda$ are Cartier, then
\[ H^0(\overline{M}_g, m a \lambda) \cong H^0(\overline{M}_g, m (a \lambda+F)) \cong H^0(\overline{M}_g, m D) \]
and the last assertion of the Proposition follows.
\end{proof}

\renewcommand{\proofname}{Proof of Corollary  {\rm \ref{mod1}}}
\begin{proof}
By \cite[Thm. A]{bcl}, given a big $\QQ$-divisor $D$ on $\ov M_g$, we have that there exists $m_0 \in \NN$ such that $\overline{M}_g - \B_+(D)$ is the largest open subset of $\ov M_g - \B(D)$ where the maps $\varphi_{mm_0D}$ are isomorphisms for every $m \in \NN$. Using this, Corollary \ref{mod1} follows from Theorem \ref{main}.
\end{proof}

\renewcommand{\proofname}{Proof of Corollary {\rm \ref{mod2}}}
\begin{proof}
Note that $K_{\alpha} := 13 \lambda - (2-\alpha) \delta$ is a strict M-divisor if and only if $\alpha > \frac{3g + 8}{8g+4}$. Then (i) follows from Corollaries \ref{mod1} and \ref{ka}.

Assume now that $\alpha =\frac{3g + 8}{8g+4}$. Then, $K_{\alpha}$ is a (non-strict) M-divisor and, moreover, it is big, for its slope $s(K_{\alpha})=8+\frac{4}{g}$ is larger than the one of a Brill-Noether divisor  if $g+1$ is composite (see \cite[Thm. 1]{eh}) or of the Petri divisor if $g$ is even (see \cite[Thm. 2]{eh}). Also we claim that $\B(K_{\alpha}) = \B_-(K_{\alpha})$. Let $x \in  \B(K_{\alpha})$ and let $v$ be any divisorial valuation with center $\{x\}$. By the finite generation of $R(\ov{M}_g, K_{\alpha})$, as in \cite[Prop. 2.8]{elmnp} or \cite[\S 2.2]{bbp}, we have that $v(\|K_{\alpha}\|) > 0$, whence $x \in \B_-(K_{\alpha})$ and the claim is proved (see also Rmk. \ref{uni}). By Theorem \ref{main}, we get $\B(K_{\alpha})=\B_-(K_{\alpha}) \subseteq \partial \overline{M}_g$, whence that $f_{\alpha}$ is defined over $M_g$.

In order to prove the second statement of (ii), observe that  $K_{\alpha}$ is proportional to the Cornalba-Harris divisor  $(8g+4) \lambda - g \delta$ of  \cite[Prop. 4.3]{ch}. It follows from \cite[Prop. 4.3, Thm. 4.12]{ch}
(see also \cite[Cor. 1.8]{yam} in positive characteristics)  that $K_{\alpha}$ intersects to zero the curves constructed by Cornalba-Harris in \cite[p. 469]{ch}
\footnote{Indeed, it is easily checked, by \cite[Prop. 4.7]{ch} (see also \cite[Thm. 1.7]{yam} in positive characteristics), that these curves are all numerically proportional to the curve $C_0$ constructed in \cite[Lemma 4.1]{m1}.}: these are curves in $\ov H_g$ given by a family $\pi:X\to T$ of stable hyperelliptic curves over a smooth projective curve $T$  obtained as a double cover $\eta:X\to T\times \PP^1$ branched over a general curve $B\subset T\times \PP^1$ of class $(2g+2,2m)$ for some $m\geq 1$.  As the image of $T\to \ov H_g$ passes through the general point of $\ov H_g$, it follows that the map $f_{\alpha}$ contracts the hyperelliptic locus $\ov H_g\subset \ov M_g$.
This finishes the proof of (ii).

Assume finally that $\alpha <\frac{3g + 8}{8g+4}$. Then $K_{\alpha}$ intersects negatively the Cornalba-Harris curves considered above, which therefore must belong to $\B_-(K_{\alpha})$. By what we said above, we deduce that $\ov{H}_g\subset \B_-(K_{\alpha})$, which proves (iii).
\end{proof}

In order to prove Corollary \ref{mod3}, we need the following

\begin{lemma}\label{nefM}
Let $g\geq 3$ and let $D$ be a non-zero $\RR$-divisor on $\overline{M}_g$. If $D$ is nef then $D$ is a strict M-divisor.
\end{lemma}
\renewcommand{\proofname}{Proof}
\begin{proof}
If $D \sim a \lambda - b_0 \delta_0 - \ldots - b_{\lfloor g/2 \rfloor} \delta_{\lfloor g/2 \rfloor}$ is nef, then by intersecting $D$ with $F$-curves one finds that its coefficients must satisfy the following relations (and many others, see  \cite[Thm. 2.1]{gkm})
$$ a\geq 12b_0-b_1 \: \text{ and } \: 2b_0 \geq b_i\geq 0 \text{ for any } 1\leq i\leq g/2.$$
From these relations we get the chain of inequalities (for any $1 \leq i \leq g/2$):
$$a\geq 12b_0-b_1 \geq 10b_0\geq 5b_i. $$
Now we conclude that the strict Moriwaki inequalities \eqref{morineq} hold true for $D$ since (for any $1\leq i\leq g/2$ and $g\geq 3$) we have that
$$\begin{aligned}
& a\geq 10b_0\geq 0 \: \text{ with equalities if and only if } a=b_0=0,\\
& 10b_0 \geq \frac{8g+4}{g} b_0 \: \text{ with equality if and only if } b_0=0,\\
& 5b_i \geq \frac{2g+1}{i(g-i)} b_i \: \text{ with equality if and only if } b_i=0.
\end{aligned}$$

\end{proof}

\renewcommand{\proofname}{Proof of Corollary {\rm \ref{mod3}}}
\begin{proof}
Suppose that $D = P + N$ is an $\RR$-CKM Zariski decomposition. Then $P$ is nef and non-trivial, because $\kappa(P) = \kappa(D) \geq 1$, whence it is a strict M-divisor by Lemma \ref{nefM}. Therefore, by  Lemma \ref{B+zd} and Theorem \ref{main}, we have $\B_+(D) = \B_+(P) \subseteq \partial \overline{M}_g$, so that $D$ is a strict M-divisor again by Theorem \ref{main}. To conclude we just note that
\[ K_{\overline{M}_g} = 13 \lambda -  2 \delta_0 -  3 \delta_1 - 2 \delta_2 - \ldots - 2 \delta_{ \lfloor g/2 \rfloor} \]
is not an M-divisor.
\end{proof}
\renewcommand{\proofname}{Proof}

\renewcommand{\proofname}{Proof of Corollary {\rm \ref{mod4}}}
\begin{proof}
Let $a \in \NN$ be such that $aK_{\overline{M}_g}$ and $aK_X$ are Cartier. Now non-positivity of $f$ means that we have
\begin{equation}
\label{zar}
aK_{\overline{M}_g} = f^{\ast}(aK_X) + E
\end{equation}
with $E \geq 0$ and $f$-exceptional. Setting $P = f^{\ast}(aK_X)$ and $N = E$, we see immediately that \eqref{zar} is an $\RR$-CKM Zariski decomposition of $aK_{\overline{M}_g}$, thus contradicting Corollary \ref{mod3}.
To conclude the proof recall that, as discussed in the introduction (after Corollary \ref{mod3}), if $K_{\overline{M}_g}$ is big, then $\overline{M}_g$ has a minimal model $(\overline{M}_g)_{min}$. Hence $(\overline{M}_g)_{min}$ has normal $\QQ$-factorial dlt singularities, $K_{(\overline{M}_g)_{min}}$ is nef and there is a projective birational map $f : \overline{M}_g \dashrightarrow (\overline{M}_g)_{min}$ that is $K_{\overline{M}_g}$-negative (in fact $f$ is obtained via contractions and flips of $K_{\overline{M}_g}$-negative extremal rays). Then $f$ cannot be a morphism, by what we proved above.
\end{proof}
\renewcommand{\proofname}{Proof}

\begin{rem}
\label{uni}
It follows from \cite[Thm. A]{bbp} that, whenever $K_{\alpha} = 13 \lambda - (2-\alpha) \delta$ or $K_{\overline{M}_g}$ is big, then $\B_-(K_{\alpha}) = \B(K_{\alpha})$, $\B_-(K_{\overline{M}_g}) = \B(K_{\overline{M}_g})$ and every irreducible component of them and of  $\B_+(K_{\alpha})$, $\B_+(K_{\overline{M}_g})$ is uniruled.
\end{rem}

\subsection{Characteristic zero versus positive characteristic}
\label{char}

Even though  we worked, throughout the paper, over an algebraically closed  field of characteristic zero, we believe that all our results can be extended to a field of arbitrary characteristic. For the benefit of the reader, let us specify what is missing in positive characteristic.
\begin{enumerate}[(i)]
\item The proof of Theorem \ref{main} uses in a crucial way \cite[Thm. B]{m1}, which is currently known only in characteristic zero. The missing ingredient in positive characteristic is, given a smooth projective curve $C$ of genus $g\geq 3$, the validity of \cite[Claim 2.5.1]{m1} for the vector bundle  $M_{\omega_C}:=\ker(H^0(C,\omega_C)\otimes \O_C\stackrel{{\rm ev}}{\longrightarrow} \omega_C)$,  namely that there exists an ample line bundle $A$  such that
\begin{equation}
\label{vanH0}
H^0(C,\Sym^m(\End(M_{\omega_C}))\otimes A)=0 \text{ for every } m\gg 0.
 \end{equation}
The semistability of the vector bundles $\Sym^m(\End(\omega_C))$, which would imply the vanishing in \eqref{vanH0}, is not known in positive characteristic. Note that $M_{\omega_C}$ is semistable (of slope $-2$) in every characteristic (see \cite{pr} or also \cite[Prop. 3.2]{el} whose proof works verbatim for $M_{\omega_C}$), but this implies the semistability of 
$\Sym^m(\End(\omega_C))$ only in characteristic zero. On the other hand, the stronger condition of being strongly semistable (which is preserved by tensor products and symmetric products even in positive characteristic by \cite[Thm. 7.2, Cor. 7.3]{m1}) fails in positive characteristic for $M_{\omega_C}$ for some smooth plane quartics $C$, as it follows by combining \cite[Cor. 4.16]{t} and the several examples worked out in \cite{mo}.
\item Once the vanishing in \eqref{vanH0} has been established, our Theorem \ref{main} would follow in any characteristic different from two (in characteristic two one would also need to extend the construction of the curves $C_0,\ldots,C_{[g/2]}$ in \cite[Lemma 4.1]{m1}). From this, Corollaries \ref{mod3}, \ref{mod4} and Proposition \ref{mod5} would also follow in any characteristic different from two. On the other hand, in order to extend  Corollaries \ref{mod1}, \ref{mod2}, \ref{ka} and Remark \ref{uni} to positive characteristic, one would also need to establish the finite generation of the section ring of the divisor $13\lambda -(2-\alpha) \delta$ (for $\alpha \in [0,1]\cap \QQ$) on $\ov M_g$, which is currently known only in characteristic zero due to \cite{bchm}.
\end{enumerate}

\paragraph{\bf Acknowledgements}

The present collaboration started during the working group ``On the Birational geometry of the moduli space of curves" that was held during the spring semester of the academic year 2011/2012 at the University of Roma Tre. We thank all the participants to the working group for the stimulating atmosphere. We thank C. Camere for pointing out the paper \cite{t}. The third author heard about Hassett's question (see Corollary \ref{mod2}) at the AIM workshop ``The log minimal model program for the moduli space of curves", Palo Alto (California, USA), 10-14 December 2012.
He would like to thank the organizers  J. Alper, M. Fedorchuk, B. Hassett and D. Smyth for the invitation to participate to the workshop.

\section{Appendix: the bigness of Moriwaki's divisor}
\label{app}

Given the cumbersome calculations we give, in this appendix, the proof of the bigness of the Moriwaki divisor. We remark that this is just for completeness' sake, as we do not need this fact in the article.

We will use the following

\begin{crit}
\label{cr}
Let $g \geq 3$ and let $D \equiv a \lambda-\sum\limits_{i=0}^{\lfloor g/2 \rfloor} b_i  \delta_i$ be an $\RR$-divisor on $\ov{M}_g$ with $a > 0$. Assume that there exists an effective 
$\RR$-divisor $E \equiv \alpha \lambda -\sum\limits_{i=0}^{\lfloor g/2 \rfloor} \beta_i \delta_i$
such that
\begin{equation}
\label{ai}
\tag{$A$}
\alpha>0
\end{equation}
\begin{equation}
\label{bi}
\tag{$B_i$}
\beta_i > 0, \mbox{\ for all } 0 \leq i \leq \lfloor g/2 \rfloor
\end{equation}
and
\begin{equation}
\label{ci}
\tag{$C_i$}
\alpha b_i<a \beta_i, \mbox{\ for all } 0 \leq i \leq \lfloor g/2 \rfloor.
\end{equation}
Then $D$ is big.
\end{crit}
\begin{proof}
We can choose $v \in \RR$, $v \geq 0$ such that, for all $0 \leq i \leq \lfloor g/2 \rfloor$, we have
\[ \frac{b_i}{\beta_i} \leq v < \frac{a}{\alpha}. \]
Now $D \equiv (a-v\alpha) \lambda + vE + \sum\limits_{i=0}^{\lfloor g/2 \rfloor} (v\beta_i - b_i) \delta_i$ is big since $\lambda$ is big.
\end{proof}

\begin{lemma}
\label{mbig}
Let $g \geq 3$ and let $M =  (8g + 4) \lambda - g \delta_0 - \sum\limits_{i = 1}^{\lfloor g/2 \rfloor} 4i(g-i) \delta_i$ be the Moriwaki divisor on $\ov{M}_g$. Then $M$ is big.
\end{lemma}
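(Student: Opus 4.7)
The plan is to apply Criterion \ref{cr} directly. With $a=8g+4$, $b_0=g$ and $b_i=4i(g-i)$ for $i\geq 1$, the required effective $\RR$-divisor $E\equiv \alpha\lambda-\sum\beta_i\delta_i$ must have all coefficients strictly positive and satisfy
\[
\frac{\beta_0}{\alpha}>\frac{g}{8g+4},\qquad \frac{\beta_i}{\alpha}>\frac{4i(g-i)}{8g+4}\quad\text{for }1\leq i\leq\lfloor g/2\rfloor.
\]
Since $M$ lies on the boundary of $\Mor(\ov{M}_g)$ (all Moriwaki inequalities are equalities for $M$), we need $E$ with \emph{strictly} steeper $\delta_i/\lambda$-ratios than those of $M$. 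The natural candidates are effective divisors of small slope.

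My first attempt would be the Brill-Noether divisor: whenever $g+1$ factors as $(r+1)(k-r)$ with $r,k-r\geq 1$, Eisenbud-Harris show that the class of $\ov{BN}^r_k$ is a positive multiple of $BN:=(g+3)\lambda-\tfrac{g+1}{6}\delta_0-\sum_{i=1}^{\lfloor g/2\rfloor} i(g-i)\delta_i$, all of whose boundary coefficients are already strictly positive. Taking $E=BN$, the inequality for $\delta_0$ unfolds to $6g(g+3)<(8g+4)(g+1)$, i.e.\ $2(g-1)(g-2)>0$, which holds for $g\geq 3$; the inequality for $\delta_i$ ($i\geq 1$) unfolds to $4(g+3)<8g+4$, i.e.\ $g>2$. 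So Criterion \ref{cr} gives bigness of $M$ whenever $g+1$ is composite, and in particular whenever $g$ is odd (as then $g+1$ is even).

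The remaining case is when $g$ is even and $g+1$ is prime. There no Brill-Noether divisor is available; I would instead appeal to the Eisenbud-Harris/Farkas Gieseker-Petri divisor on $\ov{M}_g$ for even $g$, whose class has the same shape $\alpha\lambda-\sum\beta_i\delta_i$ but slightly larger boundary coefficients relative to $\alpha$, and check the same two families of inequalities by direct computation. If desired, one can further adjust by adding a small multiple of $\sum_j\delta_j$ (itself effective) to $E$ to force $\beta_i>0$ everywhere while preserving the strict inequalities, exploiting the flexibility built into Criterion \ref{cr}.

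The main obstacle is the prime $g+1$ case: the Gieseker-Petri class has less clean coefficients than $BN$, so the verification of $\alpha b_i<a\beta_i$ for every $i$ becomes a tedious polynomial case analysis in $g$ and $i$, rather than the two-line reduction that worked for $BN$. Since the final conclusion is not actually needed elsewhere in the paper, this explicit computation is precisely the kind of ``cumbersome calculation'' the authors mention, and it is the reason the verification is relegated to an appendix rather than placed in the main text.
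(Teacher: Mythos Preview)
Your approach is exactly the paper's: apply Criterion~\ref{cr} with the Brill--Noether divisor when $g+1$ is composite and the Petri divisor when $g+1$ is prime. Your verification in the Brill--Noether case is correct and matches the paper's (the paper phrases $(C_0)$ as $g^2-3g+2>0$ and $(C_i)$ as $g-2>0$, identical to your reductions).

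Two remarks. First, your suggestion to add a small multiple of $\sum_j\delta_j$ to $E$ in order to force $\beta_i>0$ is backwards: writing $E\equiv\alpha\lambda-\sum\beta_i\delta_i$, adding $\varepsilon\sum_j\delta_j$ replaces $\beta_i$ by $\beta_i-\varepsilon$, which moves in the wrong direction. Fortunately the trick is unnecessary: since each $b_i>0$ and $\alpha>0$, condition $(C_i)$ already forces $\beta_i>0$, so $(B_i)$ comes for free---the paper observes this at the outset.

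Second, for the Petri case you stop at ``check the same two families of inequalities by direct computation,'' and this is precisely where the substance lies. The paper uses the Eisenbud--Harris formula for the class of $E_d^1$ (with $g=2(d-1)$), which gives $f_0,f_1,f_2$ explicitly and $f_i$ for $i\geq 3$ via a recursion involving auxiliary terms $\gamma_i$. Conditions $(C_0),(C_1),(C_2)$ reduce to cubic inequalities in $d$ and are routine; the real work is $(C_i)$ for $3\leq i\leq d-1$, which the paper handles by first proving the combinatorial fact $\gamma_i\geq 0$ (itself a nontrivial argument by successive differencing and a unimodality estimate) and then a separate polynomial inequality. Your sketch does not indicate how to control the $f_i$ for $i\geq 3$, and that is the step that cannot be waved away.
\qed
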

\begin{proof}
We apply  Criterion \ref{cr}. As $a = 8g+4>0$, $b_0 = g> 0$ and, for all $1 \leq i \leq \lfloor g/2 \rfloor$, $b_i = 4i(g-i) > 0$, we will need to verify only $(A)$ and all $(C_i)$'s.

If $g+1$ is not prime, as in \cite[Theorem 1]{eh1}, we can write $g+1=(r+1)(s-1)$, for some integers $s\geq 3$ and $r\geq 1$ and we can consider the Brill-Noether divisor $D_s^r$ on $ \overline{M}_g$. By \cite[Theorem 1]{eh1} there exists $c>0$ such that
\[ 0 \leq \frac{1}{c} D_s^r \eqv (g + 3) \lambda - \frac{g+1}{6} \delta_0 - \sum\limits_{i = 1}^{\lfloor g/2 \rfloor} i(g-i) \delta_i. \]
Setting $E =    \frac{1}{c} D_s^r$, we have that $(A)$ is satisfied. Also $(C_0)$ is equivalent to $g^2-3g+2 > 0$, while, for $i \geq 1$, $(C_i)$  is equivalent to $g-2 > 0$, so all the $(C_i)$'s are also satisfied. 

Assume from now on that $g+1$ is prime, so that we can write $g=2(d-1)$, for some $d\geq 3$ and we can consider the Petri divisor $E_d^{1}$ on $ \overline{M}_g$. By \cite[Theorem 2]{eh1} there exists $c>0$ such that
\[ 0 \leq \frac{1}{c} E_d^1= (6d^2+d-6) \lambda - \sum\limits_{i = 0}^{\lfloor g/2 \rfloor} f_i \delta_i \]
where
\begin{eqnarray}
f_0=d(d-1);\\
\label{pr1}
f_1=(2d-3)(3d-2);\\
\label{pr2}
f_2=3(d-2)(4d-3).
\end{eqnarray}
Moreover, setting $k=d-1$ and 
\begin{equation}
\label{gamma_i}
\gamma_i= (i-1)(i-2)\frac{(2k-2)!}{k!(k-1)!}-\sum_{l=1}^{\lfloor \frac{i-2}{2} \rfloor} 2(i-1-2l)\frac{(2l)!(2k-2-2l)!}
{(l+1)!l!(k-l)!(k-l+1)!}
\end{equation}
by \cite[(5.3)]{eh1} we have 
\begin{equation}
\label{f_i}
f_i=-i(i-2)f_1+\frac{i(i-1)}{2} f_2+ \frac{ \gamma_i}{c} \ \ \mbox{\ for all } 3 \leq i \leq d-1.
\end{equation}
Setting $E =   \frac{1}{c} E_d^1$ and recalling that $d\geq 3$, we have that $(A)$ is satisfied.

Condition $(C_0)$ is $(6d^2+d-6)g<(8g+4)d(d-1)$, which is equivalent to $2d^2-7d+6 > 0$, whence it  is satisfied.

Condition $(C_1)$ is $(6d^2+d-6)4(g-1)<(8g+4)(2d-3)(3d-2)$, which is equivalent to $2d^3-9d^2+13d-6 > 0$, whence it  is satisfied.

Condition $(C_2)$ is $(6d^2+d-6)8(g-2)<(8g+4)3(d-2)(4d-3)$, which is equivalent to $24d^3-124d^2+203d-102 > 0$, whence it  is satisfied.

For all $i=3,\dots, d-1$, condition $(C_i)$ is equivalent to 
\[ f_i > \frac{(6d^2+d-6)4i(g-i)}{8g+4}=\frac{(6d^2+d-6)i(2d-2-i)}{4d-3} \]
and using \eqref{f_i} can be transformed in
\begin{equation}
\label{ecco}
-i(i-2)f_1+\frac{i(i-1)}{2}f_2+ \frac{\gamma_i}{c}>\frac{(6d^2+d-6)i(2d-2-i)}{4d-3}, \ 3 \leq i \leq d-1.
\end{equation}
To prove \eqref{ecco} we will show that $\gamma_i\geq 0$ for all $i=3,\dots, d-1$ and
\begin{equation}
\label{ecco1}
-i(i-2)f_1+\frac{i(i-1)}{2}f_2 >\frac{(6d^2+d-6)i(2d-2-i)}{4d-3}, \ 3 \leq i \leq k.
\end{equation}
Now \eqref{ecco1} is equivalent to
\[ i \left((4d-3)(f_2-2f_1) +2(6d^2+d-6)\right)>(4d-3)(f_2-4f_1) + 2(6d^2+d-6)(2d-2) \]
and using \eqref{pr1} and \eqref{pr2}, to
\[ 24d^3-92d^2+109d-42 > (16d^2-47d+30)i \]
so that, as $i \leq d-1$, we reduce it to $8d^3-29d^2+32d-12 > 0$, whence it  is satisfied.

It remains to prove that $\gamma_i\geq 0$ for all $i=3,\dots, k=d-1$.

Note that, by (\ref{gamma_i}), $\gamma_3=\frac{2(2k-2)!}{k!(k-1)!}>0$. Hence if we put $c_i=\frac{1}{2} (\gamma_i-\gamma_{i-1})$, we will be done if we show that $c_i\geq 0$ for all $i=4,\dots, k$. In particular we can suppose $k\geq 4$.

To simplify the notation let 
\[ b_l=\frac{(2l)!(2k-2-2l)!}{(l+1)!l!(k-l)!(k-l+1)!} \]
so that, by (\ref{gamma_i}), we can write
\[ c_i=(i-2)\frac{(2k-2)!}{k! (k-1)!}-\sum_{l=1}^{\lfloor \frac{i-2}{2}\rfloor} b_l. \]
As $c_4=\frac{(2k-4)!}{k!(k-1)!}(2(2k-2)(2k-3)-1)\geq 0$, setting $d_i=c_i-c_{i-1}$, we are reduced to prove that $d_i \geq 0$ for all $i=5,\dots, k$.

If $i$ is odd, then $d_i=\frac{(2k-2)!}{k! (k-1)!}\geq 0$, so that we can assume that $i$ is even.
In particular we can put $i=2h+2$, where $2\leq h\leq \lfloor \frac{k-2}{2} \rfloor$, and we get
\[ d_i=\frac{(2k-2)!}{k!(k-1)!}-\frac{(2h)!(2k-2-2h)!}{(h+1)!h!(k-h)!(k-h+1)!}. \]
In this way, after putting $v_h=\frac{(2h)!(2k-2-2h)!}{(h+1)!h!(k-h)!(k-h+1)!}$, we need to prove that
\begin{equation}
\label{v_h}\frac{(2k-2)!}{k!(k-1)!}\geq v_h
\end{equation}
 for all $h\in\{2,\dots,\lfloor \frac{k-2}{2}\rfloor\}$ and for all $k\geq 6$.

We now claim that, for $2 \leq h \leq \lfloor \frac{k-2}{2}\rfloor$, we have $v_h \leq \max\{v_2,v_{\lfloor \frac{k-2}{2}\rfloor}\}$.

In fact, for all $h\geq 3$, we can write $v_h-v_{h-1}=C_{k,h} N_{k,h}$, where
\[ C_{k,h}=\frac{2(2h-2)!(2k-2h-2)!}{h!(h-1)!(k-h+1)!(k-h)! (h+1)(k-h+2)(k-h+1)}\geq 0 \]
for all $3 \leq h \leq \lfloor \frac{k-2}{2}\rfloor, k \geq 6$,  and
\begin{eqnarray*}
N_{k,h}&=& (2h-1)(k-h+2)(k-h+1)
-(k-h)(2k-2h-1)(h+1)\\ &=&-3k^2+13kh-2k-10h^2+6h-2.
\end{eqnarray*}

In particular $v_h\leq v_{h-1}$ if and only if $N_{k,h}\leq 0$,
if and only if $h\leq k_1:=\frac{13k+6-\sqrt{49k^2+76k-44}}{20}$ or $h\geq k_2:=\frac{13k+6+\sqrt{49k^2+76k-44}}{20}$.
Thus the claim follows by noticing that, for all $k\geq 6$, we have  $k_2> \lfloor \frac{k-2}{2}\rfloor$.

Thanks to the claim it suffices to prove that (\ref{v_h}) holds for $h=2$ and $h=\lfloor \frac{k-2}{2}\rfloor$.
Since $v_2=\frac{2(2k-6)!}{(k-2)!(k-1)!}$, we have that (\ref{v_h}) holds for $h=2$.

Suppose $h=\lfloor \frac{k-2}{2}\rfloor$. If $k$ is even, then $k=2h+2$, so that (\ref{v_h}) is equivalent to 
\[ \frac{(4h+2)!}{(2h+2)!(2h+1)!}\geq \frac{(2h)!(2h+2)!}{(h+1)!h!(h+2)!(h+3)!} \]
which in turn is verified if and only if
\[ a_h:=\frac{(4h+2)!h!(h+1)!(h+2)!(h+3)!}{((2h+2)!)^2(2h+1)!(2h)!}\geq 1 \]
for all $h\geq 2$. But $a_2=\frac{10!3!2!}{(6!)^2}\geq 1$, and, for all $h\geq 3$, we have
\[ a_h-a_{h-1}=S_h(T_h-1) \]
where
\[ S_h=\frac{(4h-2)!(h-1)!h!(h+1)!(h+2)!}{((2h)!)^2(2h-1)!(2h-2)!}\geq 0 \]
for all $h\geq 2$, and
\[ T_h=\frac{(4h+1)(4h-1)(h+2)(h+3)}{(2h+1)^2(2h+1)(2h+2)}. \]
An easy computation gives that $T_h\geq 1$ if and only if $56h^3+91h^2+h-4\geq 0$, which, in particular, is true for all $h\geq 2$.
Thus, for all $h\geq 2$, $a_h\geq a_2\geq 1$.

If $k$ is odd, then $k=2h+3$, and (\ref{v_h}) is verified if and only if
\[ a'_h:=\frac{(4h+4)!h!(h+1)!(h+3)!(h+4)!}{(2h+3)!(2h+2)!(2h)!(2h+4)!}\geq 1 \]
for all $h\geq2$. Again $a'_2=\frac{11\cdot10\cdot9}{7}\geq 1$, and
\[ a'_h-a'_{h-1}=S'_h(T'_h-1) \]
where
\[ S'_h=\frac{(4h)!(h-1)!h!(h+2)!(h+3)!}{(2h+1)!(2h)!(2h-2)(2h+2)!}\geq 0 \]
for all $h\geq 2$, and
\[ T'_h=\frac{(4h+3)(4h+1)(h+3)(h+4)}{2(2h+3)^2(2h-1)(h+2)} \]
so that $T'_h\geq 1$ if and only if $56h^3+215h^2+207h+72\geq 0$, which, in particular, is true for all $h\geq 2$, and we conclude as before.
\end{proof}

\end{document}